\theoremstyle{plain}
\newtheorem{thm}{\protect\theoremname}
  \theoremstyle{definition}
  \newtheorem{defn}[thm]{\protect\definitionname}
  \theoremstyle{plain}
  \theoremstyle{plain}
  \newtheorem{lem}[thm]{Lemma}
  \newtheorem{prop}[thm]{\protect\propositionname}
  \theoremstyle{remark}
\newtheorem{remark}[thm]{Remark}
\newtheorem{rems}[thm]{Remarks}
\theoremstyle{definition}
\newtheorem{oep}[thm]{Other eigenvalue problems}
 \newtheorem{exa}[thm]{Example}
\newcommand\curpos{\the\tikz@lastxsaved,\the\tikz@lastysaved}
\newlength{\myArraycolsep}
  \providecommand{\corollaryname}{Corollary}
  \providecommand{\definitionname}{Definition}
  \providecommand{\propositionname}{Proposition}
\providecommand{\theoremname}{Theorem}
\newcommand{\Stek}{\operatorname{Stek}}
\newcommand{\Tr}{\operatorname{Tr}}
\newcommand{\R}{\mathbb{R}}
\newcommand{\Z}{\mathbb{Z}}
\newcommand{\orb}{\mathcal{O}}
\newcommand{\oF}{\overline{F}}
\newcommand{\hm}{\widehat{M}}
\newcommand{\bs}{\backslash}
\newcommand{\D}{\mathcal{D}}
\newcommand{\bsl}{\backslash}
\begin{document}

\title{Robin and Steklov isospectral manifolds}

\author{Carolyn Gordon, Peter Herbrich, and David Webb}
\begin{abstract}
We use two of the most fruitful methods for constructing isospectral manifolds, the Sunada method and the torus action method, to construct manifolds whose Dirichlet-to-Neumann operators are isospectral at all frequencies.   The manifolds are also isospectral for the Robin boundary value problem for all choices of Robin parameter.  As in the sloshing problem, we can also impose mixed Dirichlet-Neumann conditions on parts of the boundary.  Among the examples we exhibit are Steklov isospectral flat surfaces with boundary, planar domains with isospectral sloshing problems, and Steklov isospectral metrics on balls of any dimension greater than $5$. In particular, the latter are the first examples of Steklov isospectral manifolds of dimension greater than $2$ that have connected boundaries.  
\end{abstract}

\subjclass[2000]{58J53, 35J25, 35J20, 22E25}

\keywords{Isospectrality, mixed Dirichlet-Neumann-Robin boundary conditions,
Dirichlet-to-Neumann operator, Sloshing problem, Steklov problem,
Clamped plate problem}

\address{Department of Mathematics, Dartmouth College, Hanover, NH, USA}

\email{carolyn.s.gordon@dartmouth.edu}

\address{Department of Mathematics, Dartmouth College, Hanover, NH, USA}

\email{peter.herbrich@gmail.com}

\address{Department of Mathematics, Dartmouth College, Hanover, NH, USA}

\email{david.l.webb@dartmouth.edu}

\maketitle
\thispagestyle{empty}

\begin{minipage}[t]{1\columnwidth}%
\global\long\def\mate#1{#1'}
 \global\long\def\principleOrbits#1{\widehat{#1}}
 \global\long\def\restriction#1#2{#1\vert_{#2}}
 \global\long\def\conjugate#1{\overline{#1}}

\global\long\def\manifold{M}
 \global\long\def\boundaryDirichlet{D}
 \global\long\def\boundaryNeumann{N}
 \global\long\def\boundaryRobin{R}

\global\long\def\union{\sqcup}

\global\long\def\group{G}
 \global\long\def\subgroup{H}
 \global\long\def\torus{T}
 \global\long\def\subtorus{W}

\global\long\def\isometry{A}
\global\long\def\diffeo{F_{\subtorus}}
 \global\long\def\projection#1{\overline{#1}}
 \global\long\def\diffeoInduced{\projection F_{\subtorus}}
 \global\long\def\projectionMap#1{\pi_{#1}}

\global\long\def\parameterSteklov{\alpha}
 \global\long\def\parameterRobin{\sigma}

\global\long\def\quadraticForm{q_{\parameterRobin}}
 \global\long\def\sesquilinearForm{q_{\parameterRobin}'}

\global\long\def\Laplacian{\Delta}
 \global\long\def\RobinLaplacian#1#2{\Delta_{#1}^{#2}}
 \global\long\def\DirichletToNeumannOperator#1#2{\mathcal{D}_{#1}^{#2}}

\global\long\def\continuousFunctions#1{C^{0}(#1)}

\

\global\long\def\LTwoSpace#1{L^{2}(#1)}
 \global\long\def\HOneSpace#1#2{H_{#2}^{1}(#1)}

\global\long\def\volumeFormManifold#1{dvol_{#1}}
 \global\long\def\volumeFormBoundary#1{dvol_{#1}}

\global\long\def\innerProduct#1#2{\langle#1,#2\rangle}
 \global\long\def\norm#1#2{\left\Vert#1\right\Vert_{#2}}
 \global\long\def\absoluteValue#1{|#1|}

\global\long\def\boundary#1{\partial#1}
 \global\long\def\normal{\nu}
 \global\long\def\normalDerivative#1{\partial_{\nu}#1}

\global\long\def\eigenspace{E_{\parameterSteklov,\parameterRobin}}

\global\long\def\inducedRep#1#2#3{\mathrm{Ind}_{#1}^{#2}(#3)}
\global\long\def\restrictedRep#1#2#3{\mathrm{Res}_{#1}^{#2}(#3)}

\end{minipage}

\tableofcontents

\section{Introduction\label{sec:Introduction}}

\noindent Inverse spectral problems on compact Riemannian manifolds ask to what extent geometric and topological data are encoded in the spectra of natural operators.  There is an extremely rich literature of both positive and negative results in the case of the Laplace-Beltrami operator on compact manifolds, with Dirichlet or Neumann boundary conditions (or mixed conditions) imposed when the boundary is nonempty.  The literature for other natural operators lags behind.  The goal of this article is to show that most of the negative results for the Laplace-Beltrami operator in the literature, i.e., the constructions of manifolds whose Laplace-Beltrami operators are isospectral, are equally valid for other natural operators.  We were motivated primarily by the surge of interest in Steklov eigenvalue problems and the related ``sloshing problem'' on compact Riemannian manifolds with boundary, so we will focus primarily on these problems.  However, we will also comment on other eigenvalue problems.  

\subsection{Steklov eigenvalue problems.}\label{subsec.stek}  Let $(M,g)$ be a compact smooth Riemannian manifold with boundary, and let $\Delta$ be the associated Laplace-Beltrami operator.
For~$\alpha\in\mathbb{R}$ which is not in the spectrum of the Dirichlet Laplacian and for $\rho\in C^\infty(\partial M)$, the \emph{Steklov spectrum} of $M$ at frequency $\alpha$ with boundary density $\rho$, denoted by $\Stek_{\alpha}(M,g,\rho)$ or simply by $\Stek_\alpha(M,g)$ if $\rho\equiv 1$, is the collection of real numbers $\sigma$ for which there exists a nontrivial solution $u\in C^\infty(M)$ to the eigenvalue problem 
\begin{equation}
\begin{cases}\Laplacian u=\parameterSteklov\,u\text{ on }\manifold\smallsetminus\boundary{\manifold}\\
\quad\normalDerivative u=\parameterRobin\,\rho u\text{ on }\partial M,\label{eq:LB_eigenvalue_problem}
\end{cases}
\end{equation}
where $\normalDerivative u$ is the normal derivative of $u$ on the boundary.
(The problem is well-defined, since it was required that $\alpha$ not be a Dirichlet eigenvalue of $\Delta$.)  In two dimensions, $\Stek_0(M,g,\rho)$ corresponds to the collection of squares of eigenfrequencies of a drum
all of whose mass is distributed along the boundary according to the density $\rho$ (see \cite{LambertiProvenzano}).  When $\rho\equiv 1$, the Steklov spectrum $\Stek_\alpha(M,g)$  is precisely the eigenvalue spectrum of the \emph{Dirichlet-to-Neumann operator} $\D_\alpha^{M,g}\colon C^\infty(\partial M){}\to C^\infty(\partial M){}$.  This operator associates to a function $v\in C^\infty(\partial M)$ the normal derivative of the unique extension $V:M\to\R$ of $v$ to $M$ that satisfies $\Delta V=\alpha V$.  In particular, when $\alpha=0$, the extension $V$ is harmonic, so is just the solution of the Dirichlet problem with initial data $v$.  
We remark that if the boundary density function $\rho$ is merely $L^{\infty}$, then \eqref{eq:LB_eigenvalue_problem} is still a well-defined eigenvalue problem, although the eigenfunctions are merely $H^1$ rather than smooth, and the boundary condition in \eqref{eq:LB_eigenvalue_problem} is interpreted in the sense of the Sobolev trace.

The Steklov spectrum was first introduced by A. Steklov in 1902 and has since found many remarkable applications; see the historical article \cite{Notices}.   For example, by examining the dependence of $\Stek_\alpha(M,g)$ on the parameter $\alpha$, Friedlander~\cite{Friedlander1991} derived an inequality between the Neumann and
Dirichlet eigenvalues of bounded $C^{1}$-domains in~$\mathbb{R}^{n}$;
this inequality was extended to Lipschitz domains by Arendt and Mazzeo~\cite{ArendtMazzeo2012}.  The study of the Steklov spectrum has recently gained impetus; see, for example, \cite{Brock2001, CianciGir, ColGitGir, ColboisElGirouard2011, FraserSchoen2016, GirouardPolterovich2012, GirouardParnovskiPolterovichSher2014,  Jammes2014, PolterovichSher2015, Karp2017, YY}, and the excellent survey \cite{GP}.   E.g., $\Stek_0(M,g)$ is known to determine the dimension and volume of~$\boundary{\manifold}$, the geometry of $\boundary{\manifold}$ if $\dim(\manifold)=2$~\cite{GirouardParnovskiPolterovichSher2014}, whether a domain in $\R^2$ is a disk ~\cite{GirouardParnovskiPolterovichSher2014}, and whether a domain in $\R^3$ with connected boundary is a ball~\cite{PolterovichSher2015}.  

The so-called \emph{sloshing problem}, describing oscillations of a fluid in an open container, is the special case of the Steklov problem~(\ref{eq:LB_eigenvalue_problem}) in which $\rho$ takes on only the values 0 and 1:   $\rho\equiv 0$ on the walls of the container and $\rho\equiv 1$ on the free surface of the fluid.  

In dimension two, the Steklov spectrum $\Stek_0(M,g,\rho)$ is invariant under conformal changes of metric away from the boundary; i.e., if $g'=e^fg$ with $f\equiv 0$ on $\partial M$, then $\Stek_0(M,g,\rho)=\Stek_0(M,g',\rho)$.   In fact, we even have $\D_0^{(M,g')}=\D_0^{(M,g')}$.  (This is immediate from the fact that the Laplacian of $g'$ is related to that of $g$ by $\Delta'=e^{-f}\Delta$ in dimension two.   In higher dimensions, this equality fails.)  We will say that 
$(M,g,\rho)$ and $(M',g',\rho')$ are \emph{trivially Steklov isospectral} for $\alpha=0$ if there exists a diffeomorphism $F$ from $M$ to $M'$ intertwining $\rho$ and $\rho'$ such that either (i) $F:(M,g)\to (M',g')$ is an isometry or (ii) $\dim(M)=2$ and $F^*g'=e^f g$ with $f\vert_{\partial M}=0$.  We caution that such conformal changes of metric will in general affect $\Stek_\alpha(M,g,\rho)$ for $\alpha\neq 0$, even in dimension two.

In this article we adapt to the Steklov setting the two primary techniques for constructing Laplace isospectral manifolds: Sunada's technique \cite{Sunada1985} and the torus action method (see, e.g., \cite{Gordon1994,Gordon2001,Schueth2001a,Schueth2001}).  Both techniques yield pairs of Riemannian manifolds $M_1$ and $M_2$ with boundary that are simultaneously Dirichet and Neumann isospectral and that also satisfy $\Stek_\alpha(M_1,g_1)=\Stek_\alpha(M_2,g_2)$ for all $\alpha$ not in the Dirichlet spectrum.  Moreover, $\Stek_\alpha(M_1,g_1,\rho_1)=\Stek_\alpha(M_2,g_2,\rho_2)$ for a large family of pairs of densities $(\rho_1,\rho_2)$.  The Laplace-Beltrami operators on the boundaries are also isospectral. (In some, but not all cases, the boundaries are isometric.)  

 We illustrate these techniques with nontrivial examples:   
 
\begin{itemize}
\item Pairs of (nonplanar) flat Steklov isospectral surfaces embedded in $\R^3$ constructed via the Sunada method;
\item Continuous families of mutually Steklov isospectral nonflat metrics on a ball in $\R^n$ constructed by the torus action method.
 \end{itemize}
 
 Specializing to the sloshing problem, we obtain, for example,
 \begin{itemize}
 \item Pairs of planar domains that are isospectral for the sloshing problem.
 \end{itemize}
 
Referencing our results, the article \cite{ADGHRS} gives examples of Steklov isospectral orbifolds using the Sunada and torus action techniques.   Example 6.1 in the same article uses direct computation to give examples of orbifold quotients $\Gamma_1\bs B$ and $\Gamma_2\bs B$ of Euclidean balls with $\Stek_0(\Gamma_1\bs B)=\Stek_0(\Gamma_2\bs B)$.  Lemma 6.1 of ~\cite{ColboisElGirouard2011} establishes that cylinders over Laplace-Beltrami isospectral closed manifolds have the same Steklov spectrum, again with $\alpha=0$.  To our knowledge, these examples exhaust the nontrivial examples of Steklov  isospectral  manifolds in the literature.

There are various notions of Dirichlet-to-Neumann operator acting on the space of $p$-forms on the boundary of a manifold.  The definitions in \cite{RS} and \cite{Karp} (the latter being a modification of a definition in \cite{BelSh}) give operators with discrete spectrum.    The Sunada method goes through for these Steklov spectra on $p$-forms.  However, the torus action method does not.   (This is not unexpected: the torus action method for the Laplace-Beltrami operator produces manifolds that are isospectral on functions, but it does not establish isospectrality for the Hodge Laplacian on $p$-forms.)

\subsection{Robin eigenvalue problems}\label{subsec.rob}
The Robin boundary value problem is dual to the Steklov eigenvalue problem in the following sense:  Set $\rho\equiv 1$.   Fixing a given $\sigma\in\R$ and interpreting (\ref{eq:LB_eigenvalue_problem}) as an eigenvalue problem for an unknown $\alpha$ converts (\ref{eq:LB_eigenvalue_problem})
into an eigenvalue problem with Robin boundary conditions.   Since the Steklov isospectral manifolds that we construct satisfy $\Stek_\alpha(M_1,g_1)=\Stek_\alpha(M_2,g_2)$ for \emph{every} allowable choice of the parameter $\alpha$, they will also be isospectral for the Robin boundary value problem for every choice of the Robin parameter $\parameterRobin$.  See \cite{ArendtMazzeo2012} for historical comments on this relationship between the Steklov and Robin problems.

The Sunada and torus action methods work equally well for the mixed Robin--Neumann--Dirichlet eigenvalue problem.   This problem asks for which $\parameterSteklov\in\mathbb{R}$
there exists $u\in C^\infty({\manifold}{})$, with normal derivative
$\normalDerivative u\in C^\infty({\boundary{\manifold}}{})$,
such that
\begin{equation}
\Laplacian u=\parameterSteklov\,u\text{ on }\manifold\smallsetminus\boundary{\manifold},\quad u=0\text{ on }\boundaryDirichlet,\quad\normalDerivative u=0\text{ on }\boundaryNeumann,\quad\text{and}\quad\normalDerivative u=\parameterRobin\,u\text{ on }S.\label{eq:LB_eigenvalue_problem_with_RDN_boundary_conditions}
\end{equation} where $\partial M=S\union N\union D$ (set-theoretic disjoint union) and where $\sigma$ is again a fixed Robin parameter.   In case $D=\varnothing$, then the mixed Robin-Neumann problem is dual in the sense above to the Steklov problem with boundary density $\rho\equiv 1$ on $S$ and $\rho\equiv 0$ on $N$.   

\subsection{Other eigenvalue problems.} Both the Sunada method and the torus action method are very robust.   We remark without proof that both methods easily extend, for example, to poly-Laplacians
$\Laplacian^{m}$ with Dirichlet boundary conditions $u=\normalDerivative u=\normalDerivative{^{2}u}=\ldots=\normalDerivative{^{m-1}u}=0$
on $\boundary{\manifold}$, as in the clamped plate problem where
$m=2$.

\bigskip
The paper is organized as follows:  In Sections~\ref{sec:The_Sunada_method}
and~\ref{sec:The_torus_action_method}, we adapt the Sunada method and the torus action method, respectively, to the Steklov settings.  Examples constructed via the two methods are given in Section~\ref{sec:Examples}.  Finally, in Section~\ref{sec:density}, we construct Steklov isospectral boundary density functions: more precisely, we adapt both the Sunada method and the torus action method using an idea introduced by R. Brooks in order to construct pairs of boundary density functions $\rho_1$ and $\rho_2$ on a compact Riemannian manifold $M$ with boundary such that $\Stek_\alpha(M,\rho_1)=\Stek_\alpha(M,\rho_2)$ for all $\alpha$ not in the Dirichlet spectrum of $M$.

\section*{Acknowledgements} We thank Dorothee Schueth for suggesting Proposition~\ref{prop:Boundary_volume_form_preservation} and its proof, and we thank Leonid Friedlander and Rafe Mazzeo for informative conversations.

\section{The Sunada method\label{sec:The_Sunada_method}}

\noindent We adapt the Sunada method~\cite{Sunada1985} to the context
of the Steklov spectra.
\begin{defn}
\label{def:Gassmann_equivalence} Let $\group$ be a finite group.
Two subgroups $\subgroup$ and $\mate{\subgroup}$ of $\group$ are
called \emph{almost conjugate} or \emph{Gassmann equivalent}, if every $g\in\group$ has equally many conjugates in $\subgroup$ and~$\mate{\subgroup}$.\end{defn}

\begin{remark}\label{rem.rep} Gassmann used such almost conjugate subgroups of a finite group to exhibit examples of pairs of nonisomorphic algebraic number fields with the same arithmetic (i.e., the same Dedekind zeta function).  The formula for the character of an induced representation shows easily that $H$ and $H'$ are almost conjugate if and only if the representations of $G$ induced from the trivial one-dimensional representations of $H$ and $H'$ are equivalent: i.e., $\inducedRep{\subgroup}{\group}{\boldsymbol{1}_{\subgroup}}\cong \inducedRep{\mate{\subgroup}}{\group}{\boldsymbol{1}_{\mate{\subgroup}}}$, where $\boldsymbol{1}_{\subgroup}$ and $\boldsymbol{1}_{\mate{\subgroup}}$ denote the trivial one-dimensional representations of $\subgroup$ and $\mate{\subgroup}$, respectively.

\end{remark}
\begin{thm}[Sunada's Theorem adapted to the Steklov setting]
\label{thm:Sunada}Let $\subgroup$ and $\mate{\subgroup}$ be almost
conjugate subgroups of a finite group $\group$.  Assume that $G$ acts  by isometries
on a compact Riemannian manifold $\manifold$ with 
boundary and that the restriction of the action to the subgroups $H$ and $H'$ is free.  Let $\rho$ be an $L^\infty$, nonnegative, $G$-invariant function on $\partial M$.  Continue to denote by $g$ and $\rho$ the Riemannian metric and the function induced on each of the orbit spaces $\subgroup\backslash\manifold$ and $\mate{\subgroup}\backslash\manifold$ by $g$ and $\rho$.  Then 
$$\Stek_\alpha(\subgroup\backslash\manifold,g,\rho) =\Stek_\alpha(\mate{\subgroup}\backslash\manifold,g,\rho)$$
for all $\alpha$ not in the Dirichlet spectrum of $\subgroup\bsl\manifold$ and $\mate{\subgroup}\bs\manifold$.   (Sunada's original theorem guarantees that the two quotient manifolds are both Dirichlet and Neumann isospectral, so the allowable choices of $\alpha$ are the same in both cases.)  

\end{thm}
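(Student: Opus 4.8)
The plan is to recast the Steklov eigenvalue problems on $\subgroup\bsl\manifold$ and $\mate{\subgroup}\bs\manifold$ as a comparison of the $\subgroup$- and $\mate{\subgroup}$-invariant parts of a single finite-dimensional representation of $\group$ assembled from the solutions on the total space $\manifold$, and then to conclude by Frobenius reciprocity together with the equivalence $\inducedRep{\subgroup}{\group}{\boldsymbol{1}_{\subgroup}}\cong\inducedRep{\mate{\subgroup}}{\group}{\boldsymbol{1}_{\mate{\subgroup}}}$ of Remark~\ref{rem.rep}. Once the analytic scaffolding is in place, the equality of the two spectra becomes a purely representation-theoretic count of multiplicities.

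First I would fix $\parameterSteklov$ outside the Dirichlet spectrum and, for each prospective Steklov eigenvalue $\parameterRobin$, introduce the solution space on $\manifold$,
\[
\eigenspace(\manifold)=\{\,u\in\HOneSpace{\manifold}{}:\Laplacian u=\parameterSteklov\,u\text{ on }\manifold\smallsetminus\boundary{\manifold},\ \normalDerivative u=\parameterRobin\,\rho\,u\text{ on }\boundary{\manifold}\,\},
\]
with the boundary condition read off the Sobolev trace so as to accommodate a density $\rho$ that is merely $L^{\infty}$. Since $\group$ acts by isometries it preserves $\boundary{\manifold}$ and the outward normal, so $\Laplacian$, the normal derivative, and---because $\rho$ is $\group$-invariant---multiplication by $\rho$ all commute with the $\group$-action; hence $\eigenspace(\manifold)$ is a $\group$-invariant subspace, i.e.\ a representation of $\group$. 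As $\parameterSteklov$ is not a Dirichlet eigenvalue, each solution is determined by its boundary trace $v$, and the boundary condition becomes the weighted eigenvalue problem $\DirichletToNeumannOperator{\parameterSteklov}{\manifold,g}\,v=\parameterRobin\,\rho\,v$ for the elliptic self-adjoint Dirichlet-to-Neumann operator. Its discreteness makes each $\eigenspace(\manifold)$ finite-dimensional.

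Next I would exploit freeness. Because $\subgroup$ (respectively $\mate{\subgroup}$) acts freely, the projection $\manifold\to\subgroup\bsl\manifold$ is a Riemannian covering of manifolds with boundary; pullback identifies functions on $\subgroup\bsl\manifold$ with $\subgroup$-invariant functions on $\manifold$ and intertwines $\Laplacian$, the normal derivative, and multiplication by the induced density. Thus a Steklov eigenfunction on $\subgroup\bsl\manifold$ with eigenvalue $\parameterRobin$ is precisely a $\subgroup$-invariant element of $\eigenspace(\manifold)$, so the multiplicity of $\parameterRobin$ in $\Stek_{\parameterSteklov}(\subgroup\bsl\manifold,g,\rho)$ equals $\dim\eigenspace(\manifold)^{\subgroup}$, and likewise for $\mate{\subgroup}$. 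Frobenius reciprocity and Remark~\ref{rem.rep} then give
\[
\dim\eigenspace(\manifold)^{\subgroup}=\dim\mathrm{Hom}_{\group}\!\big(\inducedRep{\subgroup}{\group}{\boldsymbol{1}_{\subgroup}},\,\eigenspace(\manifold)\big)=\dim\mathrm{Hom}_{\group}\!\big(\inducedRep{\mate{\subgroup}}{\group}{\boldsymbol{1}_{\mate{\subgroup}}},\,\eigenspace(\manifold)\big)=\dim\eigenspace(\manifold)^{\mate{\subgroup}}.
\]
Since this holds for every $\parameterRobin$, the two Steklov spectra coincide with multiplicities.

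The main obstacle is analytic rather than algebraic: one must verify that $\eigenspace(\manifold)$ really is a finite-dimensional $\group$-module and that the covering identification carries the weak boundary condition faithfully, exactly in the delicate regime where $\rho$ is only $L^{\infty}$ and may vanish on part of $\boundary{\manifold}$ (the sloshing case). I expect the cleanest route to be through the sesquilinear form $\sesquilinearForm$ on $\HOneSpace{\manifold}{}$ associated to the problem: one checks its $\group$-invariance directly and then invokes the spectral theory of the weighted Dirichlet-to-Neumann operator to secure discreteness and finite multiplicities. With that in hand the representation theory of the previous paragraph is immediate.
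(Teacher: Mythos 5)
Your proof is correct and takes essentially the same route as the paper: identify the $\sigma$-eigenspace of each quotient with the $H$-invariant (resp.\ $H'$-invariant) subspace of the eigenspace on $M$, then equate dimensions via Frobenius reciprocity and the equivalence $\inducedRep{\subgroup}{\group}{\boldsymbol{1}_{\subgroup}}\cong\inducedRep{\mate{\subgroup}}{\group}{\boldsymbol{1}_{\mate{\subgroup}}}$, which is exactly Pesce's proof of the paper's Lemma~\ref{lem.dim}. The only (harmless) difference is that the paper states that lemma for an \emph{arbitrary} $G$-vector space, so the analytic work you do to secure finite-dimensionality of the eigenspace via the weighted Dirichlet-to-Neumann operator, while sound, is not needed for the multiplicity count.
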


\begin{proof} Fix $\alpha$ and $\rho$ as in the theorem.  We will abuse language and refer to solutions $u$ of Equation~(\ref{eq:LB_eigenvalue_problem}) in the Introduction as \emph{$\sigma$-eigenfunctions} for the $(\alpha,\rho)$-Steklov problem on $M$. 

In what follows, if $\Gamma$ is any group acting linearly on a vector space $V$, we denote by $V^{\Gamma}$ the subspace of $\Gamma$-fixed vectors in $V$.

There are numerous simple and elegant proofs of Sunada's original theorem, some of which compare the dimension of each eigenspace in the two manifolds.  These proofs go through without change in our setting.  The $\sigma$-eigenfunctions for the $(\alpha,\rho)$-Steklov problem on each of the quotient manifolds $H\bsl M$ and $H'\bsl M$ pull back to $G$-invariant $\sigma$-eigenfunctions for the $(\alpha,\rho)$-Steklov problem on $M$.  Thus letting $E_\sigma\subseteq C^{\infty}(M)$ be the $\sigma$-eigenspace for the $(\alpha,\rho)$-Steklov problem on $M$, we need only show that the subspaces $E_\sigma^H$ and $E_\sigma^{H'}$ of $H$-inveriant and $H'$-invariant functions, respectively, have the same dimension.  Hence the proof of Theorem~\ref{thm:Sunada} reduces to the following lemma.    

\begin{lem}\label{lem.dim} Let $H$ and $H'$ be almost conjugate subgroups of a finite group $G$ and let $V$ be any vector space on which $G$ acts.  Then $\dim(V^H)=\dim(V^{H'})$.

\end{lem}

T. Sunada \cite{Sunada1985} gave an elementary proof of this lemma by a trace formula; see also \cite{BuserBook}, p. 295.  H. Pesce~\cite{Pesce1994} gave a representation theoretic proof by applying Remark~\ref{rem.rep} along with Frobenius reciprocity to obtain

\[
\dim(V^{\subgroup})=[\boldsymbol{1}_{\subgroup}:\restrictedRep{\subgroup}{\group}V]=[\inducedRep{\subgroup}{\group}{\boldsymbol{1}_{\subgroup}}:V],
\]
where $[U:W]$ denotes the multiplicity of the representation $U$
in $W$. Since $\inducedRep{\subgroup}{\group}{\boldsymbol{1}_{\subgroup}}$
and $\inducedRep{\mate{\subgroup}}{\group}{\boldsymbol{1}_{\mate{\subgroup}}}$
are equivalent, it follows that $\dim(V^{\subgroup})=\dim(V^{\mate{\subgroup}})$.
\end{proof}

\begin{rems}\label{rem.Sunada} We note a couple of features of the Sunada construction.

\begin{enumerate}
\item\label{trans} Lemma~\ref{lem.dim} says that the vector spaces $V^H$ and $V^{H'}$ are isomorphic.   In fact, the equivalence $\tau$ between the induced representations $\inducedRep{\subgroup}{\group}{\boldsymbol{1}_{\subgroup}}$ and $\inducedRep{\mate{\subgroup}}{\group}{\boldsymbol{1}_{\mate{\subgroup}}}$ actually yields an explicit and natural isomorphism $\tau^{\sharp}:V^{H'}\to V^H$, which Peter Buser and Pierre B\'erard \cite{Buser1986,Berard1992} called \emph{transplantation}.  See also \cite{Zelditch}, \cite{BrooksGornetPerry}, \cite{GMW}. 
 
\item If $H$ and $H'$ are conjugate subgroups of $G$, then the resulting quotient manifolds $H\bsl M$ and $H'\bsl M$ are isometric.  Even when $H$ and $H'$ are not conjugate, the quotient manifolds may be accidentally isometric.  Thus one must always verify nontriviality when using Sunada's technique (in fact, when using any of the  known techniques for constructing isospectral manifolds).
\end{enumerate}
\end{rems}

More important for our purposes is:
 
\begin{remark}\label{rem.OrbSunada}
One may drop the hypothesis that $H$ and $H'$ act freely.  The resulting quotients $\subgroup\bsl\manifold$ and $\mate{\subgroup}\bsl\manifold$ will then be Steklov isospectral good Riemannian orbifolds.  (A \emph{good orbifold} is the orbit space $\orb=\Gamma\bsl M$ of a manifold by a smooth discrete group action satisfying the condition that the isotropy group at any point is finite.  A function on $\mathcal{O}$ is said to be \emph{smooth} if its pullback to $M$ is smooth.  If $g$ is a Riemannian metric on $M$ and $\Gamma$ acts by isometries, then $g$ gives $\mathcal{O}$ the structure of a Riemannian orbifold.  The associated Laplacian $\Delta_\orb: C^\infty(\orb)\to C^\infty(\orb)$ is defined by $\pi^*\circ \Delta_\orb=\Delta_M\circ\pi^*$ where $\pi:M\to\orb$ is the projection.)  We will apply the orbifold version in Example~\ref{exa.domains} when we construct planar domains that are isospectral for the sloshing problem.
\end{remark}
 
\begin{oep}\label{rem.sun} (i) There are various notions in the literature of  a Dirichlet-to-Neumann operator acting on the space of smooth differential $p$-forms on $\partial M$ where $M$ is a smooth compact Riemannian manifold with smooth boundary.  The notions of Dirichlet-to-Neumann operator on forms defined by S. Raulot and A. Savo \cite{RS} and by Karpukhin \cite{Karp} have discrete spectra.  Using either of these definitions of Steklov spectrum on $p$-forms, the hypotheses of Theorem~\ref{thm:Sunada} (with $\rho\equiv 1$) guarantee that the manifolds $H\bsl M$ and $H'\bsl M$ have the same Steklov spectra on $p$-forms, for all $p$.
 
 (ii) As noted in the introduction, taking $\rho\equiv1$ in Theorem~\ref{thm:Sunada} immediately yields isospectrality of the Robin problems on $H\bsl M$ and $H'\bsl M$ for every choice of Robin parameter.   Alternatively, one can prove the Robin isospectrality directly using the same method as in the proof of Theorem~\ref{thm:Sunada}.

Moreover, one can easily modify Theorem~\ref{thm:Sunada} to address mixed Robin-Neumann-Dirichlet problems.    One assumes that $\partial M=\partial_R M\union \partial_N M\union \partial _D M$, where each of the three subsets is $G$-invariant and where the decomposition is sufficiently nice so that the mixed Robin-Neumann-Dirichlet problem, in which Robin, Neumann, and Dirichlet conditions are imposed on $\partial_R M,\, \partial_N M$ and $\partial _D M$, respectively, is well-defined with discrete spectrum.   Then the mixed problems on $H\bsl M$ and $H'\bsl M$ are isospectral, where the respective boundary conditions are imposed on $H\bsl(\partial_R M),\, H\bsl(\partial_N M)$, and $H\bsl(\partial _D M)$ and similarly for $H'$.

\end{oep}

\section{The torus action method\label{sec:The_torus_action_method}}

The torus action method was developed to construct Riemannian manifolds that have the same Laplace spectrum but that are not even locally isometric.   There are several versions, e.g., ~\cite{Gordon1994,Gordon2001,Schueth2001a, Schueth2001}.   We first state the version in \cite{Schueth2001} and then adapt it to the Steklov setting.  

In the following, a \emph{torus} always means a nontrivial, compact, connected, abelian Lie group. Let~$\torus$ be a torus acting effectively by isometries on a compact, connected Riemannian manifold~$\manifold$.  The union of those orbits on which $\torus$ acts freely is an open, dense submanifold of $\manifold$ (see \cite{Bredon}) that we will denote by $\principleOrbits{\manifold}$ ; it carries the structure of a principal $\torus$-bundle.  

\begin{thm}\label{thm.schueth}\cite{Schueth2001} Let $T$ be a torus which acts effectively on two compact,
connected Riemannian manifolds $(M,g)$ and $(M',g')$ by isometries.
For each subtorus $W\subset T$ of codimension one, suppose that there exists
a $T$-equivariant diffeomorphism $F_W: M\to M'$ such that
\begin{enumerate}
\item $F_W:M\to M'$ is volume-preserving; i.e., $F_W^* dvol_{M'}= dvol_M$ where $dvol_M$ and $dvol_{M'}$ are the Riemannian volume densities of $M$ and $M'$;
\item  $F$ induces an isometry $\oF_W: (W\bsl\hm, g_W)\to(W\bsl\widehat{M'},g'_W)$ where $g_W$ and $g'_W$ are the metrics induced by $g$ and $g'$ on the quotients.
\end{enumerate}
Then $(M,g)$ and $(M',g')$ are
isospectral.  Moreover,  if the manifolds have boundary, then they are both Dirichlet and Neumann isospectral.

\end{thm}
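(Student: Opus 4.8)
The plan is to use the $T$-action to split $L^2(M)$ into isotypic pieces and to reduce, one character at a time, to the isometries of quotients furnished by hypothesis~(2), with the volume-preserving hypothesis~(1) entering only to match fiber volumes. Since $T$ acts by isometries, its unitary representation on $L^2(M)$ commutes with $\Delta$; as $T$ is abelian I would decompose $L^2(M)=\bigoplus_{\mu\in\widehat{T}}L^2_\mu(M)$ into one-dimensional isotypic summands indexed by the character lattice $\widehat{T}$, where $L^2_\mu(M)=\{f:f(t\cdot x)=\mu(t)f(x)\}$. Because $\Delta$ commutes with $T$ it preserves each summand, and the spectrum of $\Delta$ is the union over $\mu$ of the spectra of $\Delta|_{L^2_\mu(M)}$ (and likewise for $\Delta'$ on $M'$). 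Thus it suffices to show, for every $\mu$, that $\Delta|_{L^2_\mu(M)}$ and $\Delta'|_{L^2_\mu(M')}$ are unitarily equivalent.

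Fix $\mu$ and set $W=(\ker\mu)_0$, a codimension-one subtorus (any codimension-one $W$ serves when $\mu=0$), and apply the hypotheses to this $W$; write $B=W\backslash\widehat{M}$ and $B'=W\backslash\widehat{M'}$. Since $\mu$ is trivial on $W$, every $f\in L^2_\mu(M)$ is $W$-invariant and descends to a function $\bar f$ on $B$ transforming by the induced character $\bar\mu$ of the residual circle $T/W$. I would then invoke that $\pi\colon\widehat{M}\to B$ is a Riemannian submersion for $g_W$, together with the standard identity for basic functions
\[
\Delta(\pi^*\bar f)=\pi^*\!\left(\Delta_B\bar f+\langle H,\operatorname{grad}_B\bar f\rangle\right),
\]
where $H$ is the basic mean-curvature vector field of the $W$-orbits. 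Because $H=\pm\operatorname{grad}_B(\log V)$ with $V(b)$ the Riemannian volume of the fiber $\pi^{-1}(b)$, the reduced operator depends only on $g_W$ and on the fiber-volume function $V$. Moreover the coarea formula gives $\int_M|f|^2\,dvol_M=\int_B|\bar f|^2\,V\,dvol_B$, so $f\mapsto\bar f$ identifies $L^2_\mu(M)$ isometrically with the $\bar\mu$-isotypic subspace of $L^2(B,V\,dvol_B)$ and carries $\Delta|_{L^2_\mu(M)}$ to $\Delta_B+\langle H,\operatorname{grad}_B\,\cdot\,\rangle$.

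The heart of the matter is that $\overline{F}_W$ preserves all of this structure. By hypothesis~(2) it is an isometry $(B,g_W)\to(B',g'_W)$, hence intertwines $\Delta_B$ with $\Delta_{B'}$ and preserves $dvol_B$; by $T$-equivariance it is $T/W$-equivariant, hence respects the residual circle actions and the character $\bar\mu$. The one remaining---and, I expect, the main---obstacle is to show $\overline{F}_W^*V'=V$, i.e.\ that the fiber volumes agree, and this is exactly where hypothesis~(1) is used. I would derive it by comparing coarea formulas: for every $\bar h$ on $B'$, using $\pi'\circ F_W=\overline{F}_W\circ\pi$ and $F_W^*\,dvol_{M'}=dvol_M$,
\[
\int_{B'}\bar h\,V'\,dvol_{B'}=\int_{M'}(\pi')^*\bar h\,dvol_{M'}=\int_M\pi^*(\overline{F}_W^*\bar h)\,dvol_M=\int_{B'}\bar h\,(V\circ\overline{F}_W^{-1})\,dvol_{B'},
\]
so that $V'=V\circ\overline{F}_W^{-1}$. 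Consequently $\overline{F}_W$ also preserves $H=\pm\operatorname{grad}_B(\log V)$, so it intertwines the two reduced operators and is unitary for the weighted measures $V\,dvol_B$ and $V'\,dvol_{B'}$. Unwinding the identifications shows that $F_W^*$ restricts to a unitary $L^2_\mu(M')\to L^2_\mu(M)$ intertwining the Laplacians, so $\Delta|_{L^2_\mu(M)}$ and $\Delta'|_{L^2_\mu(M')}$ are unitarily equivalent; summing over $\widehat{T}$ gives isospectrality.

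All of the above takes place on the open dense principal parts $\widehat{M},\widehat{M'}$, whose complements have measure zero, so the $L^2$ theory is unaffected. For the boundary statement, $T$ preserves $\partial M$ (it acts by isometries), so the isotypic decomposition is compatible with both the Dirichlet and the Neumann realizations of $\Delta$, and the descent $f\mapsto\bar f$ sends the Dirichlet (resp.\ Neumann) condition on $\partial M$ to the corresponding condition on $\partial B=W\backslash\partial\widehat{M}$. Since $\overline{F}_W$ is an isometry carrying $\partial B$ to $\partial B'$ and preserving these conditions, the argument above applies verbatim to the Dirichlet and Neumann spectra. I expect the genuinely delicate points to be the submersion identity and the fiber-volume matching; the rest is bookkeeping of the isotypic decomposition.
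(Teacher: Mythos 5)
Your argument is correct in substance but follows a genuinely different route from the paper's (the theorem is quoted from \cite{Schueth2001}, and the paper replays Schueth's method in its proof of Theorem~\ref{thm:Torus_method}). The paper's route is purely variational: it decomposes $H^1$ under the $T$-action as in Equation~(\ref{eq.decomp}), shows that hypotheses (1) and (2) force the norm identities~(\ref{eq.h1}) for $W$-invariant functions, assembles the map $\tau=F_T^*\oplus\bigoplus_W F_W^*$, and concludes by min--max; no operator is ever pushed down to the quotient. You instead reduce $\Delta$ on each character-isotypic piece to a weighted Laplacian $\Delta_B+\langle H,\operatorname{grad}\,\cdot\,\rangle$ on $B=W\backslash\widehat{M}$, identify $H$ with $\pm\operatorname{grad}\log V$, and match fiber volumes via the coarea computation $V'=V\circ\overline{F}_W^{-1}$; your use of hypothesis (1) there is exactly parallel to its role in proving~(\ref{eq.h1}), and the computation is correct. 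Your approach buys more: it shows $F_W^*$ actually intertwines the Laplacians pointwise on $W$-invariant smooth functions (on the principal part, hence everywhere by density and smoothness), yielding explicit unitary equivalences isotypic piece by piece rather than mere equality of spectra. What the variational route buys is robustness precisely where your sketch is soft: (i) self-adjointness and domain questions for the reduced operator on the incomplete, noncompact base $B$ never arise, since min--max needs only the $H^1$ and $L^2$ norm identities; and (ii) the Neumann case requires no pointwise normal derivatives --- your ``descent preserves the Neumann condition'' step takes place on $\widehat{M}\cap\partial M$ and extends to all of $\partial M$ only if the free part is dense in the boundary, a condition the paper treats as a genuine extra hypothesis (see Proposition~\ref{prop:Boundary_volume_form_preservation} and the discussion of \cite[Theorem 1.2]{Gordon2001}), whereas the quadratic-form argument handles Dirichlet and Neumann conditions through the form domains $H^1_0(M)$ and $H^1(M)$, both preserved by the diffeomorphism $F_W$, with no boundary regularity needed. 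If you want your operator-level proof airtight, either verify density of $\widehat{M}\cap\partial M$ in $\partial M$ in your setting or, more simply, extract from your own computations the two norm identities and finish variationally; note also that it is this variational formulation, not the intertwining, that the paper then adapts to the Steklov problem, where the Rayleigh quotient~(\ref{eq.ray}) acquires a boundary term in the denominator.
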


We now adapt this method to the Steklov setting.

\begin{thm}
\label{thm:Torus_method}
Let $T$ be a torus which acts isometrically and effectively on two compact,
connected Riemannian manifolds $(M,g)$ and $(M',g')$ with boundary.  Let $\rho\in L^\infty(\partial M)$ and $\rho'\in L^\infty(\partial M')$ be $T$-invariant.
For each subtorus $W\subset T$ of codimension one, suppose that there exists
a $T$-equivariant diffeomorphism $F_W: M\to M'$ such that
\begin{enumerate}
\item\label{F_WVolPres} 
$F_W:M\to M'$ is volume-preserving;
\item\label{F_WBoundaryVolPres} 
$F\vert_{\partial M}:\partial M\to\partial M'$ is volume-preserving, i.e., $F_W^* dvol_{\partial M'}= dvol_{\partial M}$;
\item\label{F_WPresDensity} $F_W^*\rho'=\rho$;
\item\label{F_WIsomOnQuots} $F_W$ induces an isometry $\oF_W: (W\bsl\hm, g_W)\to(W\bsl\widehat{M'},g'_W)$, where $g_W$ and $g'_W$ are the metrics induced by $g$ and $g'$ on the quotients.
\end{enumerate}

Then for each $\alpha$ not in the Dirichlet spectrum of $(M,g)$, we have 
\begin{equation}\label{eq.same}\Stek_\alpha(M,g,\rho)=\Stek_\alpha(M',g',\rho').\end{equation}
(Theorem~\ref{thm.schueth} guarantees that the two quotient manifolds are Dirichlet isospectral, so the allowable choices of $\alpha$ are the same in both cases.)
\end{thm}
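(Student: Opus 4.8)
The plan is to phrase the Steklov problem variationally, decompose everything into $T$-isotypic (Fourier) components, and transplant each component using the maps $F_W$, along the lines of Schueth's argument for Theorem~\ref{thm.schueth}; the genuinely new points are that the boundary volume and the density $\rho$ must also be matched. First I would record a variational description of the Steklov spectrum valid for merely $L^\infty$ densities. Fix $\alpha$ not in the Dirichlet spectrum and, on $H^1(M)$ (complex valued), introduce the sesquilinear forms
\[ q_\alpha^{M}(u,v)=\int_M\big(\langle\nabla u,\nabla\overline v\rangle-\alpha\,u\,\overline v\big)\,dvol_M,\qquad b_\rho^{M}(u,v)=\int_{\partial M}\rho\,(u|_{\partial M})\,\overline{(v|_{\partial M})}\,dvol_{\partial M}, \]
with the analogous forms $q_\alpha^{M'},\,b_{\rho'}^{M'}$ on $M'$. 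Since $\alpha$ is not a Dirichlet eigenvalue, each boundary datum has a unique $\alpha$-harmonic extension, and $\sigma\in\Stek_\alpha(M,g,\rho)$ exactly when $q_\alpha^{M}(u,\cdot)=\sigma\,b_\rho^{M}(u,\cdot)$ holds on the space $\mathcal H_\alpha$ of $\alpha$-harmonic functions for some $0\neq u\in\mathcal H_\alpha$; equivalently, $\sigma$ is an eigenvalue of the Dirichlet-to-Neumann operator $\mathcal{D}_\alpha$ relative to multiplication by $\rho$. This is the weak form of \eqref{eq:LB_eigenvalue_problem} and makes sense for $\rho\in L^\infty$, the eigenfunctions lying in $H^1$.

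Next I would decompose by weights. Because $T$ acts by isometries fixing $\partial M$ and $\rho$ is $T$-invariant, the forms $q_\alpha^{M},\,b_\rho^{M}$ and the extension map are $T$-equivariant, so $\mathcal{D}_\alpha$ and multiplication by $\rho$ commute with the $T$-action. Writing $L^2(M)=\bigoplus_\mu L^2(M)_\mu$ for the decomposition into isotypic components indexed by the characters $\mu$ of $T$, the generalized eigenvalue problem block-diagonalizes, so that $\Stek_\alpha(M,g,\rho)$ is the disjoint union over $\mu$ of the per-weight spectra $\Stek_\alpha^{(\mu)}(M,g,\rho)$, and likewise for $M'$. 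It therefore suffices to prove $\Stek_\alpha^{(\mu)}(M,g,\rho)=\Stek_\alpha^{(\mu)}(M',g',\rho')$ for each fixed $\mu$.

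For this comparison I would use the diffeomorphisms $F_W$. If $\mu\neq 0$, set $W=W_\mu$, the identity component of $\ker\mu$, a codimension-one subtorus; every $h\in L^2(M)_\mu$ then satisfies $h(w\cdot x)=\mu(w)h(x)=h(x)$ for $w\in W$, so $h$ is $W$-invariant (if $\mu=0$ the functions are $T$-invariant, hence $W$-invariant for any codimension-one $W$, and I just fix one such). By $T$-equivariance, $F_W^*$ carries $L^2(M')_\mu$ onto $L^2(M)_\mu$; by hypothesis~(\ref{F_WVolPres}) it is an $L^2$-isometry of interiors, while hypotheses~(\ref{F_WBoundaryVolPres}) and~(\ref{F_WPresDensity}) show it intertwines $b_{\rho'}^{M'}$ with $b_\rho^{M}$. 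The key step is that it also intertwines $q_\alpha$. A $W$-invariant $h$ descends to $\overline h$ on $W\backslash\widehat{M}$, and its gradient is horizontal for the Riemannian submersion $\widehat{M}\to(W\backslash\widehat{M},g_W)$, so $|\nabla_M h|^2=|\nabla_{g_W}\overline h|^2\circ\pi_W$ pointwise on $\widehat{M}$; since hypothesis~(\ref{F_WIsomOnQuots}) makes $\overline{F}_W$ an isometry of the quotients with $\overline{F}_W\circ\pi_W=\pi'_W\circ F_W$, I would deduce the pointwise identity $|\nabla_M(h\circ F_W)|^2=|\nabla_{M'}h|^2\circ F_W$ for weight-$\mu$ functions, then integrate, using~(\ref{F_WVolPres}) once more, to get $q_\alpha^{M}(F_W^*h,F_W^*h)=q_\alpha^{M'}(h,h)$; polarization upgrades this to the full sesquilinear forms. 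Finally, since $\Delta$ preserves each weight sector, $\alpha$-harmonicity of a weight-$\mu$ function is detected by criticality of $q_\alpha$ under weight-$\mu$ variations, so $F_W^*$ maps $\mathcal H_\alpha\cap L^2(M')_\mu$ onto $\mathcal H_\alpha\cap L^2(M)_\mu$. Thus $F_W^*$ is an equivalence of the two weight-$\mu$ generalized eigenvalue problems, giving $\Stek_\alpha^{(\mu)}(M,g,\rho)=\Stek_\alpha^{(\mu)}(M',g',\rho')$; summing over $\mu$ yields \eqref{eq.same}.

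I expect the main obstacle to be the bookkeeping forced by the non-coercivity of $q_\alpha$: for large $\alpha$ the interior form is indefinite, so the generalized eigenvalue problem must be routed through the self-adjoint operator $\mathcal{D}_\alpha$---legitimate precisely because $\alpha$ avoids the Dirichlet spectrum, whose coincidence for $M$ and $M'$ is supplied by Theorem~\ref{thm.schueth}---and one must check that this self-adjoint reduction is compatible with the block-diagonalization over weights. By contrast the differential-geometric heart, the horizontal-gradient identity for $W$-invariant functions, is soft once the submersion picture is set up; the only care needed there is that all integrals are taken over the full-measure principal stratum $\widehat{M}$, the singular orbits forming a null set that affects neither $q_\alpha$ nor $b_\rho$.
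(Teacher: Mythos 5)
Your proposal is correct, and it shares the paper's analytic core: the Fourier decomposition of function spaces over the codimension-one subtori $W\subset T$, transplantation by $F_W^*$ (with $W=W_\mu$ chosen from the weight, exactly as the paper's decomposition \eqref{eq.decomp} groups the non-trivial weights by their kernels), and the use of hypotheses \eqref{F_WBoundaryVolPres} and \eqref{F_WPresDensity} to match the boundary form $\int_{\partial M}\rho\, u\bar{v}\,dvol_{\partial M}$. Where you genuinely diverge is in two places. First, the paper simply quotes from Schueth the norm identities \eqref{eq.h1} for $W$-invariant functions, whereas you re-derive them from the horizontal-gradient identity for the Riemannian submersion $\hm\to W\bsl\hm$ together with hypotheses \eqref{F_WVolPres} and \eqref{F_WIsomOnQuots}; your derivation is exactly the underlying mechanism, correctly including the remark that the non-principal stratum is a null set. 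Second, and more substantively, the finishing move differs: the paper assembles one global isomorphism $\tau=F_T^*\oplus\bigoplus_W F_W^*\colon H^1(M')\to H^1(M)$, checks it commutes with the trace operators via \eqref{eq.decomp2}, and concludes by comparing Rayleigh quotients \eqref{eq.ray} in the min-max characterization \eqref{minmax}; you instead block-diagonalize the generalized eigenproblem $\D_\alpha u=\sigma\rho u$ over the characters $\mu$ of $T$ and show $F_{W_\mu}^*$ is an equivalence of the weight-$\mu$ weak problems, transferring eigenfunctions directly. What each buys: the min-max route is shorter on the functional-analytic side, but, as you rightly worry, the quotient \eqref{eq.ray} is indefinite for large $\alpha$ and the ordering $0=\sigma_0<\sigma_1\le\cdots$ in \eqref{minmax} is literally the small-$\alpha$ picture, so some self-adjoint bookkeeping is suppressed there; your weak-solution transfer sidesteps the min-max entirely and gives equality of the spectra (with multiplicity, sector by sector) for any $T$-invariant $L^\infty$ density, degenerate or sign-changing, at the cost of the extra equivariance checks --- that the $\alpha$-harmonic extension, the forms, and $H^1_0$ respect the weight decomposition, and that testing $\alpha$-harmonicity against weight-$\mu$ variations suffices --- all of which you carry out or correctly flag, with the decomposition $H^1=H^1_0\oplus\mathcal{H}_\alpha$ (valid precisely because $\alpha$ avoids the Dirichlet spectrum) closing the last of these.
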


Before proving Theorem~\ref{thm:Torus_method}, we recall the variational characterization of the eigenvalues in $\Stek_\alpha(M,g,\rho)$.   First recall that the boundary restriction map that takes $u\in H^1(M)\cap C^0(M)$ to $u|_{\partial M}$ extends to the compact trace operator $\Tr: H^1(M)\to L^2(\partial M)$.  We write $u|_{\partial M}=\Tr(u)$.  Define
\begin{equation}\label{eq.ray} R_{M,\alpha,\rho}(u)=\frac{\int_M\,\Vert\nabla u\Vert^2\,dvol_M-\alpha\int_M\,  u^2\,dvol_M}{\int_{\partial M}\,u|_{\partial M}^2\,\rho\,dvol_{\partial M}}\end{equation}
Denoting the eigenvalues in $\Stek_\alpha(M,g,\rho)$ as 
$$0=\sigma_0<\sigma_1\leq \sigma_2\leq\dots,$$
we have 
\begin{equation}\label{minmax}\sigma_k=\inf_{E_k(M,\rho)}\sup_{0\neq u\in E_k}\,R_{M,\alpha,\rho}(u)\end{equation}
where the infimum is over all $k$-dimensional subspaces $E_k(M,\rho)$ of $H^1(M)$ consisting of functions whose restrictions to the boundary are $\rho$-orthogonal to the constant functions, i.e $\int_{\partial M}\,u|_{\partial M}\,\rho\,dvol_{\partial M}=0$.  
\begin{proof}

[Proof of Theorem \ref{thm:Torus_method}] We adapt the proof of~\cite[Theorem 1.4]{Schueth2001}.  For $\subtorus<\torus$ any subtorus, let $H^1(M)^W \subset H^1(M)$, $L^2(M)^W \subset L^2(M)$, $H^1(M')^W \subset H^1(M')$, and $L^2(M')^W \subset L^2(M')$
denote the subspaces of $\subtorus$-invariant functions.  By Fourier decomposition with respect to the isometric action of $T$, we have
\begin{equation}\label{eq.decomp}H^1(M)=H^1(M)^T\,\oplus\,\bigoplus_W\,(H^1(M)^W\ominus H^1(M)^T)\end{equation}
and 
\begin{equation}\label{eq.decomp2}L^2(\partial M)=L^2(\partial M)^T\,\oplus\,\bigoplus_W\,(L^2(\partial M)^W\ominus L^2(\partial M)^T)\end{equation}
where the sum is over all subtori $W$ of $T$ of codimension one. Multiplication by the $T$-invariant density $\rho$ preserves each of the subspaces $L^2(M)^T$ and $L^2(M)^W$.  Moreover the trace operator $\mathrm{Tr}: H^1(M)\to L^2(\partial M)$ respects these decompositions.  Analogous statements  hold with $M$ replaced by $M'$.

As shown in~\cite{Schueth2001}, conditions \eqref{F_WVolPres} and \eqref{F_WIsomOnQuots} of Theorem~\ref{thm:Torus_method} imply that if $\subtorus$ is a subtorus of $T$ of codimension at most one and $u\in H^1(M')^W$, then
\begin{equation}\label{eq.h1}
\norm{\diffeo^{*}u}{\HOneSpace{\manifold}{}}=\norm u{\HOneSpace{\mate{\manifold}}{}}\qquad\text{and also}\qquad\norm{\diffeo^{*}u}{\LTwoSpace{\manifold}}=\norm u{\LTwoSpace{\mate{\manifold}}}.
\end{equation}
The first of the equations in~\eqref{eq.h1}, the $T$-equivariance of the maps $F_W$, and Equation~(\ref{eq.decomp}) yield an isomorphism 
$$\tau: H^1(M')\to H^1(M)$$ 
given by 
$$\tau=F_T^*\,\oplus\,\bigoplus_W\,F_W^*.$$
Hypothesis \eqref{F_WBoundaryVolPres} of the theorem and Equation~(\ref{eq.decomp2}) similarly yield an isomorphism
$$\tau_\partial:=F_T^*\,\oplus\,\bigoplus_W\,F_W^*: L^2(\partial M')\to L^2(\partial M)$$
and the diagram
$$\xymatrix{
{H^1(M')}\ar^{\tau}[r]\ar_{\Tr}[d]&{H^1(M')}\ar^{\Tr}[d]\\
{L^2(\partial M')}\ar_{\tau_{\partial}}[r]&{L^2(\partial M)}
}$$
commutes.

Hypotheses \eqref{F_WBoundaryVolPres} and \eqref{F_WPresDensity} of the theorem guarantee for each $k=1,2,\dots$ that $\tau$ maps $E_k(M',\rho')$ to $E_k(M,\rho)$ and that the denominators in the Rayleigh quotients $R_{M,\alpha,\rho}(\tau(u))$ and $R_{M',\alpha,\rho'}(u)$ coincide for each $u\in E_k(M',\rho')$.
The pair of equalities~\eqref{eq.h1} imply that the
numerators in $R_{M,\alpha,\rho}(\tau(u))$ and $R_{M',\alpha,\rho'}(u)$ also agree, and the theorem follows from Equation~(\ref{minmax}).\end{proof}

Although condition \eqref{F_WBoundaryVolPres} in Theorem~\ref{thm:Torus_method} does not appear in Theorem~\ref{thm.schueth} or in any of the other versions of the torus action method, it is actually satisfied in all of the examples that have been constructed thus far by these methods, as will be explained
in Section~\ref{sec:Examples}.  Moreover, the version of the torus action method in \cite[Theorem 1.2]{Gordon2001} includes a hypothesis that the principal $T$-orbits be dense in $\boundary{\manifold}$ and $\boundary{\mate{\manifold}}$ in order to produce Neumann isospectral manifolds; this condition is stronger than condition \eqref{F_WBoundaryVolPres} in the following sense.
\begin{prop}
\label{prop:Boundary_volume_form_preservation}  Let $\manifold$ and $\mate{\manifold}$ be compact, connected, orientable Riemannian manifolds with a faithful isometric action by a torus $\torus$ satisfying conditions~\eqref{F_WVolPres}  and~\eqref{F_WIsomOnQuots} of Theorem~\ref{thm:Torus_method}.  If~$\principleOrbits M\cap\boundary{\manifold}$ is dense in $\boundary{\manifold}$, then condition \eqref{F_WBoundaryVolPres} of Theorem~\ref{thm:Torus_method} is satisfied as well.\end{prop}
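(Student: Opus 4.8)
The plan is to fix a codimension-one subtorus $W<\torus$ and prove that the smooth function
\[
j_W:=\frac{\diffeo^{*}\,dvol_{\boundary{\mate\manifold}}}{dvol_{\boundary\manifold}}
\]
on $\boundary\manifold$ is identically $1$, which is exactly condition~\eqref{F_WBoundaryVolPres}. Since $\diffeo$ is $\torus$-equivariant and both boundary volume densities are $\torus$-invariant, $j_W$ is $\torus$-invariant; orientability makes it a globally defined smooth function. It therefore suffices to verify $j_W\equiv 1$ on the dense open set $\principleOrbits\manifold\cap\boundary\manifold$ of principal orbits and then appeal to continuity. On that set I would exploit the Riemannian submersion $\pi_W\colon\principleOrbits\manifold\to W\bsl\principleOrbits\manifold$ to factor each relevant volume form into a vertical (fiber) factor and a horizontal (quotient) factor, reducing everything to a comparison of these factors on $\manifold$ and $\mate\manifold$.

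The first substantive step is to record the geometry along $\principleOrbits\manifold\cap\boundary\manifold$. Because $\torus$ acts by isometries it preserves $\boundary\manifold$, so at a boundary point the $\torus$-orbit, hence the $W$-orbit, is tangent to $\boundary\manifold$, and the outward unit normal $\normal$ is orthogonal to the orbit. Consequently the $W$-orbits \emph{lie in} $\boundary\manifold$ and carry the \emph{same} induced metric whether regarded inside $\manifold$ or inside $\boundary\manifold$; writing $\omega_W$ for the induced volume form along these orbits, the standard submersion decomposition gives simultaneously
\[
dvol_{\manifold}=\omega_W\wedge\pi_W^{*}\,dvol_{W\bsl\principleOrbits\manifold}\quad\text{and}\quad dvol_{\boundary\manifold}=\omega_W\wedge\pi_W^{*}\,dvol_{\boundary(W\bsl\principleOrbits\manifold)},
\]
with the identical vertical factor $\omega_W$ in both, since the fibers and their induced metrics coincide. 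The corresponding identities hold on $\mate\manifold$ with $\omega_W'$, $\pi_W'$, and the quotient $W\bsl\principleOrbits{\mate\manifold}$.

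Next I would feed in the two hypotheses. The $\torus$-equivariance of $\diffeo$ maps the fundamental vector fields of $W$ on $\manifold$ to those on $\mate\manifold$, so $\diffeo$ carries $W$-orbits to $W$-orbits and there is a single $\torus$-invariant positive function $\phi$ (the ratio of orbit volume densities) with $\diffeo^{*}\omega_W'=\phi\,\omega_W$ — and $\phi$ is the same whether the orbits are viewed in the bulk or in the boundary, being intrinsic to the orbits with their common induced metric. Meanwhile the descent relation $\pi_W'\circ\diffeo=\oF_W\circ\pi_W$ together with condition~\eqref{F_WIsomOnQuots}, namely that $\oF_W$ is an isometry whose restriction to the quotient boundary $\boundary(W\bsl\principleOrbits\manifold)$ is again an isometry, yields $\diffeo^{*}\pi_W'^{*}\,dvol_{W\bsl\principleOrbits{\mate\manifold}}=\pi_W^{*}\,dvol_{W\bsl\principleOrbits\manifold}$ and likewise for the quotient boundary forms. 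Pulling the two displayed decompositions on $\mate\manifold$ back by $\diffeo$ therefore gives, on $\principleOrbits\manifold\cap\boundary\manifold$,
\[
\diffeo^{*}\,dvol_{\mate\manifold}=\phi\cdot dvol_{\manifold}\quad\text{and}\quad \diffeo^{*}\,dvol_{\boundary{\mate\manifold}}=\phi\cdot dvol_{\boundary\manifold},
\]
with the \emph{same} scalar $\phi$. Condition~\eqref{F_WVolPres} says the first ratio is $1$, forcing $\phi\equiv 1$, whence $j_W=\phi=1$ on the dense set $\principleOrbits\manifold\cap\boundary\manifold$; continuity then gives condition~\eqref{F_WBoundaryVolPres} on all of $\boundary\manifold$.

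The crux — and the step I expect to demand the most care — is the claim in the second paragraph that the bulk and boundary volume forms share exactly the vertical factor $\omega_W$, i.e. that the $W$-orbits sit isometrically inside $\boundary\manifold$ and that $\pi_W$ restricts to a Riemannian submersion of boundaries with the same fibers. This rests on the elementary but essential facts that isometries preserve the boundary and that the normal direction is horizontal for $\pi_W$; once it is established, the passage from the bulk identity supplied by condition~\eqref{F_WVolPres} to the desired boundary identity is purely formal. Orientability is used only to make $j_W$ a globally defined smooth function, and the density of $\principleOrbits\manifold\cap\boundary\manifold$ in $\boundary\manifold$ is precisely what propagates the pointwise computation on principal orbits to the entire boundary.
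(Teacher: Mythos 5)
Your proof is correct, but it takes a genuinely different route from the paper's. The paper argues pointwise at each principal boundary point via the interior product: writing $\nu$, $\nu'$ for the outward unit normals and $i$, $i'$ for the boundary inclusions, it uses $dvol_{\partial M}=i^*(\nu\,\lrcorner\,dvol_M)$, and its one geometric lemma is that ${F_W}_*\nu-\nu'$ is tangent to $\partial M'$. That is where condition~\eqref{F_WIsomOnQuots} enters: $\nu$ and $\nu'$ are horizontal (the boundary $W$-orbits being tangent to the boundary) and project under the submersions to the outward unit normals of the quotient boundaries, which the isometry $\oF_W$ matches, so ${F_W}_*\nu-\nu'$ is vertical, hence tangent to the orbit inside $\partial M'$. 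Condition~\eqref{F_WVolPres} then gives ${F_W}_*\nu\,\lrcorner\,dvol_{M'}=(F_W^{-1})^*(\nu\,\lrcorner\,dvol_M)$, and the conclusion follows in three lines with no auxiliary function. You instead factor both the bulk and boundary volume forms through the principal $W$-bundle structure over the principal stratum and show that the bulk and boundary Jacobians of $F_W$ equal one and the same fiberwise ratio $\phi$, which condition~\eqref{F_WVolPres} forces to be $1$. Your route buys a structural insight --- at principal boundary points the boundary Jacobian of an equivariant map inducing a quotient isometry automatically equals its interior Jacobian, so interior volume preservation propagates to the boundary --- and it displays exactly where each hypothesis is used; the paper's route buys brevity and avoids fiber-volume bookkeeping. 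The continuity/density endgame is identical in both.

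One displayed identity in your write-up needs repair: $\diffeo^{*}\omega'_W=\phi\,\omega_W$ is not true as an equation of ambient differential forms, because $F_W$ is merely $T$-equivariant and volume-preserving, not an isometry, so it need not carry horizontal spaces to horizontal spaces; the identity holds only upon restriction to vertical vectors, i.e.\ modulo forms of vertical degree strictly less than the orbit dimension. This costs you nothing in the end: any such defect term is annihilated when wedged against the pullback of the quotient volume form (which has top horizontal degree), so $\diffeo^{*}dvol_{\mate\manifold}=\phi\,dvol_{\manifold}$ and $\diffeo^{*}dvol_{\boundary{\mate\manifold}}=\phi\,dvol_{\boundary\manifold}$ still follow. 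But you should either state the claim as an equality of vertical densities along the orbits, or note that $\omega_W$ itself is only well defined modulo lower vertical degree --- exactly the ambiguity that dies in the wedge. With that rephrasing, and with orientations fixed (or the computation run with densities, as condition~\eqref{F_WVolPres} is stated), your proof is complete.
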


\begin{proof} Since the manifolds are orientable, condition~\eqref{F_WVolPres} says that $F_W$ pulls back the Riemannian volume form $dvol_{M'}$ of $M'$ to that of $M$.

By continuity, it suffices to show that condition \eqref{F_WBoundaryVolPres} holds at each $p\in\principleOrbits M\cap\boundary{\manifold}$.  Let $p\in \hm\cap \partial M$ and let  $p'=F_W(p)$.   The $T$-equivariance of $F_W$ guarantees that $p'\in\hm'$.   Let $\nu$ and $\nu'$ denote the outward unit normals to $\partial M$ and $\partial M'$ at $p$ and $p'$, respectively, and let $i:\partial M\to M$ and $i':\partial M'\to M'$ be the inclusion maps.  The facts that $F_W$ is an isometry and that the action of $W$ on $M$ and $M'$ preserves the boundaries imply that $\nu'-{F_W}_*(\nu)$ is tangent to $\partial M'$ and hence  
\begin{equation}\label{eq.tan} (i')^*({F_W}_*(\nu)\,\lrcorner\,  dvol_{M'})=(i')^*(\nu'\,\lrcorner\, dvol_{M'}) = dvol_{\partial M'}
 \end{equation}
 By condition (1) of Theorem~\ref{thm:Torus_method}, we have 
 \begin{equation}\label{eq.pull}{F_W}_*(\nu)\,\lrcorner\,  dvol_{M'}={F_W}_*(\nu)\,\lrcorner\, (F_W^{-1})^* dvol_{M}=(F_W^{-1})^*(\nu\,\lrcorner\,dvol_{M})\end{equation}
 Since $F_W\circ i=i'\circ F_W$, Equations~\ref{eq.tan} and \ref{eq.pull} yield
$$F_W^*(dvol_{\partial M'})=F_W^*\circ(i')^*\circ (F_W^{-1})^*(\nu\,\lrcorner\,dvol_{M})=i^*(\nu\,\lrcorner\,dvol_{M})=dvol_{\partial M}.$$

\end{proof}

\section{Examples\label{sec:Examples}}

\subsection{Examples using the Sunada technique}

There is a wealth of examples of Dirichlet or Neumann isospectral manifolds that have been constructed by the Sunada method and its various generalizations; see \cite{G:SunadaTwoDecades} and references therein.   The original Sunada technique has yielded, for example, isospectral flat surfaces embedded in $\R^3$ \cite{Buser1988} and large finite families of mutually isospectral Riemann surfaces \cite{BGG}, which can be easily modified to produce families of mutually isospectral hyperbolic surfaces with boundary.   All examples of isospectral manifolds with boundary constructed by the original Sunada technique are also Steklov isospectral.  

There are various generalizations of Sunada's theorem, surveyed in \cite{G:SunadaTwoDecades}, not all of which go through directly for the Steklov spectrum.   For example, the pair of Neumann isospectral flat surfaces with boundary constructed in \cite{BerardWebb1995} (one orientable, the other nonorientable) using the orbifold version of Sunada's Theorem are not Steklov isospectral, since one of the manifolds has four boundary components while its isospectral companion has only three boundary components.  Yet the number of boundary components of a surface is determined by the Steklov spectrum (see \cite{GirouardParnovskiPolterovichSher2014}).  See \ref{exa.domains} for some further comments.

In this subsection we illustrate the Sunada method with just a sampling of the many examples.

\subsubsection{Steklov isospectral flat surfaces embedded in $\R^3$}\label{exa.buser}

In \cite{Buser1988}, Peter Buser introduced the use of Schreier graphs to construct isospectral manifolds via Sunada's technique and illustrated the method by constructing a pair of Dirichlet and Neumann isospectral flat surfaces with boundary in $\R^3$. 

For the reader's convenience, we briefly review Buser's construction before addressing the Steklov setting.   Recall that if $G$ is a finite group and $S=\{s_{1},s_{2},\ldots,s_{n}\}$ is a set of nonidentity elements generating $G$, the \emph{Cayley graph} $\Gamma(\group,S)$ is the $n$-regular edge-colored directed graph whose vertices are the elements of~$\group$, and whose $i$-colored edges encode right multiplication by the generators $s_{i}$.  More precisely, there is an $i$-colored edge from $g$ to~$g'$ if and only if $g s_i=g'$.  The group $\group$ acts transitively and faithfully on $\Gamma(\group,S)$ by left multiplication.  If $H$ is a subgroup of $G$, then the \emph{Schreier graph} $\Gamma(H\bs G, S)$ is the quotient of $\Gamma(\group,S)$ by the action of $H$.  Equivalently, the vertices of the Schreier graph correspond to the elements of the space of right-cosets $H\bs G$ and the edges indicate the right action of the elements of $S$ on $H\bs G$.  The graph theoretic version of Sunada's Theorem says that if $H_1$ and $H_2$ are almost conjugate subgroups of $G$, then for any fixed choice of generating set $S$, the adjacency operators (or Laplacians or other natural operators) associated with the Schreier graphs $\Gamma(H_1\bs G,S)$ and $\Gamma(H_2\bs G,S)$ are isospectral.

To construct a manifold from a Schreier graph, Buser chooses a basic tile $T$, whose piecewise-smooth boundary contains $2n$ disjoint line segments called sides, labelled $s_1$, $s_1^{-1}$, $s_2$, $s_2^{-1},\ldots,s_n,s_n^{-1}$.   
Sides $s_i$ and $s_i^{-1}$ are required to have the same length. The sides need not exhaust the entire boundary of the tile.  To construct a manifold $M(H\bs G,S)$, consider a collection of $[G:H]$ identical tiles, labelled by the elements of $H\bs G$, whose sides are glued together in pairs according to the pattern encoded by the Schreier graph.  More precisely, side $s_{i}$ of tile $Hg$ is glued to side $s_{i}^{-1}$ of tile $H g s_i$.  Similarly, one uses the Cayley graph $\Gamma(G,S)$ to construct a manifold $M(G,S)$.  Observe that $G$ acts on $M(G,S)$ on the left, and that $M(H\bs G,S)=H\bs M(G,S)$.   Let $\partial _0(T)$ denote the complement in $\partial T$ of the union of the sides $s_{1},s_{1}^{-1},s_{2},s_{2}^{-1},\ldots,s_{n},s_{n}^{-1}$.  Buser arbitrarily chooses boundary conditions on $\partial_0 T$.   The boundary conditions chosen on $\partial_0 T$ then determine the boundary conditions on the manifold $M(G,S)$ and on $M(H\bs G,S)$ for any subgroup $H<G$.  

Now suppose that $H_1$ and $H_2$ are almost conjugate subgroups of $G$.  Then, as observed by Buser, Sunada's Theorem immediately yields isospectrality of $M_1:=M(H_1\bs G,S)$ and $M_2:=M(H_2\bs G,S)$ with respect to the given boundary conditions.

Moving to our setting, we instead choose arbitrarily an $L^\infty$ density function $\rho$ on $\partial_0 T$, thus giving rise to a density function, still denoted $\rho$, on the boundaries of $M:=M(G,S)$ and $M_i$, $i=1,2$.  The density on $\partial M(G,S)$ is $G$-invariant and Theorem~\ref{thm:Sunada} yields
$$\Stek_\alpha(M_1,\rho)=\Stek_\alpha(M_2,\rho).$$

It is easy to construct an abundance of examples this way.  For a concrete example, we consider the pair of flat surfaces in $\R^3$ given by Buser in \cite{Buser1988}.  In this example,  $\group=\operatorname{GL}(3,\Z_2)$, $H_1$ is the subset of matrices with first row $(1,0,0)$, and $H_2=H_1^{t}$ is the subset consisting of transposes of elements of $H_1$.  The two subgroups $H_1$ and $H_2$ are almost conjugate in $G$ (each element of $H_1$ is similar to its transpose in $H_2$) and have index 7 in $G$.  Buser's surfaces are obtained by using a particular generating set $S=\{a,b\}$ of order 2 and the basic tile shown in Figure~\ref{fig:Basic_tile}.   (Ignore for now the dashed line in Figure~\ref{fig:Basic_tile}; it will be used in the next example.)  Buser actually used a cross-shaped tile; we have smoothed out the corners of the tile so that the resulting isospectral surfaces $M_1$ and $M_2$ are smooth.  

We have not included a picture of the two surfaces here.  However, Figure~\ref{fig:RS_isospectral_planar_domains} shows the quotient of each of the two surfaces by a reflection.  To visualize the original surfaces, simply double the two domains in the figure across the part of the boundary indicated by double lines.  Alternatively, see  \cite{Buser1988}, where the surfaces constructed with a cross-shaped tile are drawn.

The surfaces are easily seen to be nonisometric; in fact they have different diameter.   Since we are in dimension two, we also verify that they are not trivially Steklov isospectral when $\alpha=0$ by showing that $M_2$ is not isometric to the surface $M_1$ endowed with a metric $e^f g_E$, where $g_E$ is the Euclidean metric and where the conformal factor $f$ vanishes on the boundary.  Recall that the scalar curvature of $e^f g_E$ is $4e^{-f}\Delta f$, where $\Delta$ denotes the Euclidean Laplacian.  Noting that $M_2$ is flat, we conclude that $f$ must be a harmonic function.   Since $f$ vanishes on the boundary, $f$ must be identically zero.   Thus no such conformal equivalence exists and the surfaces are nontrivially Steklov isospectral.

\noindent 
\begin{figure}
\noindent \begin{centering}
\hfill{}\subfloat[Buser's tile\label{fig:Basic_tile}]{\noindent \protect\begin{centering}
\begin{minipage}[b]{0.2\columnwidth}%
\noindent \protect\begin{center}
\raisebox{0.2mm}
{\begin{tikzpicture}[scale=0.4,thick,rotate=45]

\draw (1.77,4.77) node {$a$};
\draw (5.25,1.6) node {$a^{-1}$};
\draw (1.9,-0.92) node {$b$};
\draw (-1,2) node {$b^{-1}$};

\draw[dashed,thin] ($(0,0) + (45:1)$) -- ($(4,4) + (225:1)$);
 \draw ( 1, 0) arc (0:90:1);
 \draw ( 3, 0) arc (180:90:1);
 \draw ( 1, 4) arc (0:-90:1);
 \draw ( 3, 4) arc (180:270:1);
  
 \draw[double] (0.97,0) -- (3.03,0);
 \draw[double] (0,0.97) -- (0,3.03);
 \draw[double] (0.97,4) -- (3.03,4);
 \draw[double] (4,0.97) -- (4,3.03);

\end{tikzpicture}}\protect
\par\end{center}%
\end{minipage}\protect
\par\end{centering}

}\hfill{}\subfloat[Steklov-Neumann and Robin-Neumann isospectral planar domains\label{fig:RS_isospectral_planar_domains}]{\noindent \protect\begin{centering}
\begin{minipage}[b]{0.7\columnwidth}%
\noindent \protect\begin{center}
\begin{tikzpicture}[scale=0.4,thick,rotate=45]

 \draw[dashed,thin] ($(3,3) + (45:1)$) -- ($(7,7) + (225:1)$);
 \draw[dashed,thin] ($(7,3) + (45:1)$) -- ($(11,7) + (225:1)$);
 \draw[dashed,thin] (8,3) -- (10,3);
 \draw[dashed,thin] (7,4) -- (7,6);
 \draw[dashed,thin] (3,4) -- (3,6);
 \draw[dashed,thin] (0,7) -- (2,7);

 \draw (7,0) arc (90:45:1);
 \draw (11,4) arc (90:225:1);
 \draw (10,7) arc (180:270:1);
 \draw (6,7) arc (180:360:1);
 \draw (3,8) arc (90:360:1);
 \draw (3,10) arc (270:225:1);
 \draw ($(-1,7)+(45:1)$) arc (45:-45:1);
 \draw ($(3,3)+(135:1)$) arc (135:0:1);
 \draw (7,2) arc (-90:180:1);

 \draw[double] ($(7,-1)+(45:0.97)$) -- ($(11,3)+(225:0.97)$);
 \draw[double] (11,3.97) -- (11,6.03);
 \draw[double] (10.03,7) -- (7.97,7);
 \draw[double] (6.03,7) -- (3.97,7);
 \draw[double] (3,7.97) -- (3,10.03);
 \draw[double] ($(3,11)+(225:0.97)$) -- ($(-1,7)+(45:0.97)$);
 \draw[double] ($(-1,7)+(-45:0.97)$) -- ($(3,3)+(135:0.97)$);
 \draw[double] (3.97,3) -- (6.03,3);
 \draw[double] (7,-0.03) -- (7,2.03);
\end{tikzpicture}\hspace{10mm}\begin{tikzpicture}[scale=0.4,thick,rotate=45]

 \draw[dashed,thin] ($(-1,7) + (45:1)$) -- ($(3,11) + (225:1)$);
 \draw[dashed,thin] ($(3,3) + (45:1)$) -- ($(7,7) + (225:1)$);
 \draw[dashed,thin] (4,3) -- (6,3);
 \draw[dashed,thin] (7,4) -- (7,6);
 \draw[dashed,thin] (3,4) -- (3,6);
 \draw[dashed,thin] (0,7) -- (2,7);

 \draw ($(3,3)+(135:1)$) arc (135:-45:1);
 \draw (7,0) arc (90:135:1);
 \draw (7,2) arc (270:0:1);
 \draw (10,3) arc (180:135:1);
 \draw (6,7) arc (180:315:1);
 \draw (3,8) arc (90:360:1);
 \draw (3,10) arc (270:180:1);
 \draw (0,11) arc (360:270:1);
 \draw (-1,8) arc (90:-45:1);

 \draw[double] ($(-1,7)+(-45:0.97)$) -- ($(3,3)+(135:0.97)$);
 \draw[double] ($(3,3)+(-45:0.97)$) -- ($(7,-1)+(135:0.97)$);
 \draw[double] (7,-0.03) -- (7,2.03);
 \draw[double] (7.97,3) -- (10.03,3);
 \draw[double] ($(7,7)+(315:0.97)$) -- ($(11,3)+(135:0.97)$);
 \draw[double] (6.03,7) -- (3.97,7);
 \draw[double] (3,7.97) -- (3,10.03);
 \draw[double] (2.03,11) -- (-0.03,11);
 \draw[double] (-1,10.03) -- (-1,7.97);
\end{tikzpicture}\protect
\par\end{center}%
\end{minipage}\protect
\par\end{centering}

}\hfill{}
\par\end{centering}

\caption{Neumann conditions are imposed on all straight boundary parts (double-lined).  The domains arise from~\cite[Figure 7]{GordonWebbWolpert1992} by
using tiles as in Figure~\ref{fig:Basic_tile}.}\label{Figure1}

\end{figure}
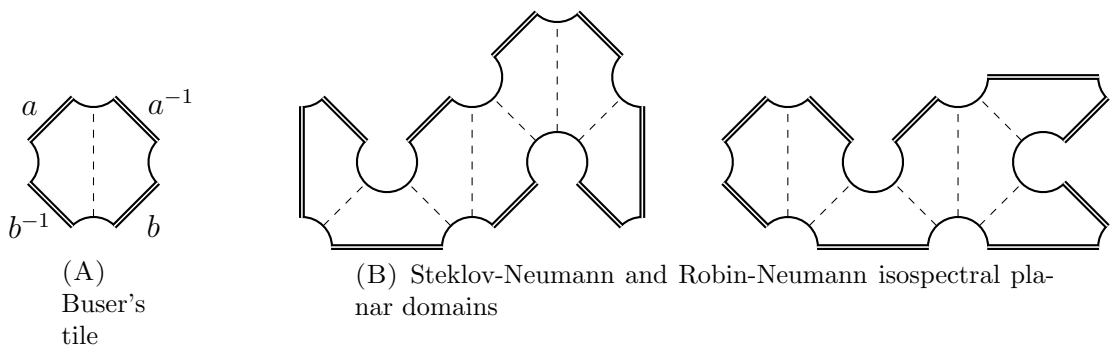

\subsubsection{Planar domains with isospectral sloshing problems} \label{exa.domains}

The first examples of isospectral planar domains \cite{GordonWebbWolpert1992} arose from the observation that the two isospectral flat surfaces $M_i$, $i=1,2$, described in the previous example each admit an isometric involution $\beta_i$, covering the symmetry $\beta_0$ of the basic tile in Figure~\ref{fig:Basic_tile} given by reflection across the dashed line.  The quotients of the surfaces by the involutions, shown in Figure~\ref{fig:RS_isospectral_planar_domains}, are both Dirichlet and Neumann isospectral.  As we will explain below, the version of Sunada's technique used to prove isospectrality does not yield Steklov isospectrality of these domains except in the special case that the density $\rho$ is identically zero on the part of the boundary indicated by double lines (the straight segments of the boundary) in Figure~\ref{fig:RS_isospectral_planar_domains}.   However, if we choose $\rho$ to be zero on this part of the boundary and $\rho\equiv 1$ on the curved edges, then we do obtain isospectrality for the mixed Neumann-Steklov problem (the sloshing problem).   One can also make a more general choice of $\rho$ on the curved parts of the boundary as long as consistency is maintained among the various tiles.

The proof in \cite{GordonWebbWolpert1992} of Neumann isospectrality of the planar domains goes as follows:  The involutive isometries $\beta_i$, $i=1,2$, lift to an involutive isometry $\beta$ of the covering manifold $M=M(G,S)$.  The isometry $\beta$ normalizes the group $G$ and each of the subgroups $H_i$, $i=1,2$.  The groups $\tilde{H_1}:=H_1\rtimes\langle\beta\rangle$ and $\tilde{H_2}:= H_2\rtimes\langle\beta\rangle$ are almost conjugate subgroups of $\tilde{G}:=G\rtimes\langle\beta\rangle$.  The group $\tilde{G}$ does not act freely on $M$.   However, we may apply the orbifold version of Sunada's Theorem as in Remark~\ref{rem.OrbSunada} to conclude that the quotients $\tilde{H_1}\bs M$ and $\tilde{H_2}\bs M$ are isospectral  orbifolds.  The underlying spaces of these orbifolds are the domains in Figure~\ref{fig:RS_isospectral_planar_domains}.  The singular sets of these orbifolds consist of the doubled line segments in Figure~\ref{fig:RS_isospectral_planar_domains},  which are reflector edges where the isotropy group has order $2$.  (Note that these line segments lift to interior segments of $M$, not to boundary edges.)    By the definition of smooth functions and of the Laplacian on these orbifolds (see Remark~\ref{rem.OrbSunada}), the isospectrality of the two orbifolds is equivalent to isospectrality of the underlying planar domains with Neumann boundary conditions placed on the doubled line segments of the boundary and  whatever boundary conditions on the curved edges were chosen on the curved edges of the basic tile $T$ used to construct $M$.   

If we choose the boundary density function $\rho\equiv 1$ on the boundary of the basic tile, the same argument yields the Steklov isospectrality of the two orbifolds, which in turn corresponds to isospectrality for the sloshing problem on the two underlying planar domains.

\begin{remark}\label{rem.trans2} We have summarized the original proof of the isospectrality of the planar domains in order to make clear the reason we can only get sloshing isospectrality rather than more general Steklov isospectrality of the planar domains.   However, transplantation as in Remark~\ref{trans} yields a very simple proof by picture of the sloshing isospectrality.  


\end{remark}

\subsubsection{Mixed Robin-Neumann-Dirichlet and Steklov-Neumann-Dirichlet isospectral domains}\label{exa.RND}
Levitin, Parnovski and Polterovich \cite{LPP} constructed examples of pairs of domains that are isospectral with mixed boundary conditions, including a pair consisting of a triangle and a square, whose isospectrality cannot be explained directly by Sunada's technique but can be shown by an explicit transplantation of eigenfunctions.  Later Band and Parzanchevsky \cite{BP} gave a representation theoretic explanation, which was further developed and applied systematically in Herbrich \cite{Herbrich2011}.  

One can similarly use transplantation directly to obtain domains that are isospectral for the mixed Robin-Neumann-Dirichlet and mixed Steklov-Neumann-Dirichlet problems.  We give two examples here, both obtained by modifying the construction of the isospectral triangle and square in \cite{LPP}.  The triangle and square in \cite{LPP} are each constructed by gluing together two copies of an isosceles right triangle (the basic tile); they are glued along the hypotenuse to obtain the square and along one of the legs to obtain the triangle in the isospectral pair.   Figure~\ref{fig:RS_isospectral_square_and_triangle} shows two modifications of their construction, both obtained by cutting out a half disk from the basic tile.   

For the mixed Robin-Neumann-Dirichlet problem, we impose Robin boundary conditions --- with the same Robin parameter on both domains in each pair --- on the curved part of the boundary indicated by a solid line in the figures, Neumann conditions on the part of the boundary indicated by doubled lines, and Dirichlet conditions on the part indicated by dashed lines.  With these boundary conditions we claim that $M$ is isospectral to $M'$ and $P$ is isospectral to $P'$.

Let $u$ be an eigenfunction for the mixed problem on $M$, say with eigenvalue $\lambda$, and denote by $u_1$ and $u_2$ the restrictions of $u$ to the two tiles making up $M$ as in Figure~\ref{fig:RS_isospectral_square_and_triangle}.
We transplant $u$ to an eigenfunction $u'=T(u)$ on $M'$ whose restrictions $u_1'$ and $u_2'$ to the two tiles of $M'$ as in Figure~\ref{fig:RS_isospectral_square_and_triangle} are given by

\begin{equation}
\left(\begin{array}{c}
\mate u_{1}\\
\mate u_{2}
\end{array}\right)=\frac{1}{\sqrt{2}}\left(\begin{array}{cc}
1 & -1\\
1 & 1
\end{array}\right)\left(\begin{array}{c}
u_{1}\\
u_{2}
\end{array}\right).\label{eq:Transplantation_in_matrix_form}
\end{equation}
In writing $u_{1}\pm u_{2}$, we implicitly identify the tiles underlying
$u_{1}$ and $u_{2}$, which involves a reflection in the dotted diagonal
of $\manifold$.   To see that $u'$ is smooth on the dotted interior segment, we observe that $u_{1}$ extends smoothly by reflection across this segment (since the segment corresponds to an edge in $M$ where $u_1$ satisfies Neumann conditions) and, similarly, $u_{2}$ smoothly extends by negative reflection across this segment (which corresponds to an edge of $M$ where $u_2$ satisfies Dirichlet conditions).  It is then straightforward to verify that $u'$ is an eigenfunction with eigenvalue $\lambda$ for the mixed Robin-Neumann-Dirichlet problem.    The transplantation map $T$ is invertible and isospectrality follows.     The same transplantation map yields the mixed Robin-Neumann-Dirichlet isospectrality of $P$ and $P'$.    

To prove the Steklov-Neumann-Dirichlet isospectrality of $M$ and $M'$ and of $P$ and $P'$, one uses the same expression for the transplantation map $T$, but now acting on Steklov-Neumann-Dirichlet eigenfunctions.  Alternatively, the isospectrality is immediate from the duality between the Steklov-Neumann-Dirichlet and the Robin-Neumann-Dirichlet problem.
\noindent 
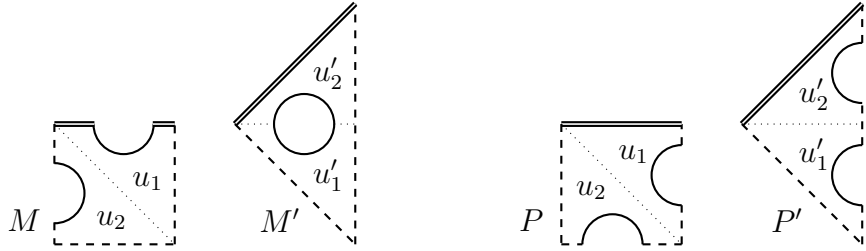
\begin{figure}
\noindent \begin{centering}
\begin{tikzpicture}[scale=0.4,thick]

 \draw (3.1,2.1) node {$u_{1}$};
 \draw (1.9,0.8) node {$u_{2}$};
 \draw (9.1,2.3) node {$\mate{u}_{1}$};
 \draw (9.1,5.7) node {$\mate{u}_{2}$};
 \draw (-1.0,0.8) node {$\manifold$};
 \draw (7.5,0.8) node {$\mate{\manifold}$};

 \draw (9.3,4) arc (0:360:1);
 \draw[dashed] (10, 0) -- (10, 8);
 \draw[double] (10, 8) -- ( 6, 4);
 \draw[dashed] ( 6, 4) -- (10, 0);
 \draw[dotted,thin] (  6,4) -- (7.3,4);
 \draw[dotted,thin] (9.3,4) -- (10,4);

 \draw (1.3,4) arc (180:360:1);
 \draw ( 0,2.7) arc (90:-90:1);

 \draw[dashed] ( 0, 0)    -- (4, 0);
 \draw[dashed] ( 4, 0)    -- (4, 4);
 \draw[double] ( 4, 4)    -- (3.27, 4);
 \draw[double] (1.33, 4) -- (0, 4);
 \draw[dashed] ( 0, 4)    -- (0,2.67);
 \draw[dashed] ( 0,0.73)  -- (0, 0);
 \draw[dotted,thin] ( 0, 4) -- ( 4, 0);

\end{tikzpicture}\hspace{2cm}\begin{tikzpicture}[scale=0.4,thick]

 \draw (2.4,3.0) node {$u_{1}$};
 \draw (1.0,1.8) node {$u_{2}$};
 \draw (8.4,3.0) node {$\mate{u}_{1}$};
 \draw (8.4,5.0) node {$\mate{u}_{2}$};
 \draw (-1 ,0.8) node {$P$};
 \draw (7.5,0.8) node {$\mate{P}$};

 \draw (10,3.3) arc (90:270:1);
 \draw (10,6.7) arc (90:270:1);

 \draw[dashed] (10,  0) -- (10,1.3);
 \draw[dashed] (10,3.3) -- (10,4.7);
 \draw[dashed] (10,6.7) -- (10,  8);
 \draw[double] (10, 8) -- ( 6, 4);
 \draw[dashed] ( 6, 4) -- (10, 0);
 \draw[dotted,thin] (6,4) -- (10,4);

 \draw (2.7,0) arc (0:180:1);
 \draw (4,3.3) arc (90:270:1);

 \draw[dashed] (  0, 0) -- (0.7,0);
 \draw[dashed] (2.7, 0) -- (4, 0);
 \draw[dashed] ( 4,  0) -- (4,1.3);
 \draw[dashed] ( 4,3.3) -- (4, 4);
 \draw[double] ( 4,  4) -- (0, 4);
 \draw[dashed] ( 0,  4) -- (0, 0);
 \draw[dotted,thin] ( 0, 4) -- ( 4, 0);

\end{tikzpicture}
\par\end{centering}

\protect\caption{Robin-Steklov isospectral planar pairs. They are based on the main
example in~\cite{LPP}. Isospectrality
follows from the transplantation~(\ref{eq:Transplantation_in_matrix_form}).\label{fig:RS_isospectral_square_and_triangle}}
\end{figure}

\subsection{Examples using the torus action method}

The torus action method, e.g., Theorem~\ref{thm.schueth}, has led to numerous pairs and families of Dirichlet and Neumann isospectral manifolds as well as isospectral closed manifolds.  All known examples satisfy the additional condition \eqref{F_WBoundaryVolPres} of Theorem~\ref{thm:Torus_method} and therefore have isospectral Dirichlet-to-Neumann operators at all frequencies.
In fact, Proposition~\ref{prop:Boundary_volume_form_preservation} applies to all of them, yielding condition~(2)
in Theorem~\ref{thm:Torus_method}. Letting $B^ n$ and $T^{n}$ denote the $n$-dimensional ball and torus, respectively,
the examples include:
\begin{enumerate}
\item Continuous families of nonisometric metrics on $B^{n}$ for $n\geq8$~\cite{Gordon2001,Schueth2001},
and pairs of such metrics on $B^6$ and $B^7$ \cite{Schueth2001}.
These metrics can be chosen as Euclidean outside of a smaller concentric ball~\cite{Schueth2001}.  \label{enu:Continuous_families_of_isospectral_balls}
\item Continuous families of metrics on $B^n\times\torus^{k}$ for ${n\geq5}$ and $k\geq 2$
that are the restrictions of locally nonisometric homogeneous metrics on $\mathbb{R}^{n}\times\torus^{k}$~\cite{GordonWilson1997}. 
\item For $n\geq6$, if one removes a concentric ball from $B^{n}$ to obtain an annulus $\manifold$ and takes $\rho\equiv 1$ on one of the boundary spheres and $\rho\equiv 0$ on the other, then the metrics in (1) and (2)
restrict to metrics with isospectral sloshing problems on $\manifold$.
\end{enumerate}

\section{Isospectral density functions}\label{sec:density}
 In \cite{Brooks}, R. Brooks modified Sunada's theorem in order to construct isospectral potentials for the Schr\"odinger operator.  Shortly thereafter, a similar method was used to construct isospectral conformally equivalent Riemannian metrics \cite{BPY}.  The technique became standard and produced many new examples.  Later D. Schueth \cite{Schueth2001a} analogously modified the torus action method in order to produce isospectral potentials and isospectral conformally equivalent Riemannian metrics.  In this section, we observe that similar modifications of Theorem~\ref{thm:Sunada} and Theorem~\ref{thm:Torus_method} allow us to produce isospectral boundary density functions for the Steklov spectrum.  Here we carry out the modification of Theorem~\ref{thm:Sunada}; the modification of Theorem~\ref{thm:Torus_method} is similar.

\begin{thm}\label{thm:SunadaDensity}  Let $M$, $G$, $H$, $H'$, $g$ and $\rho$ satisfy the hypotheses of Theorem~\ref{thm:Sunada}.   Assume in addition that there exists an isometry $\tau$ of $(M,g)$, not in $G$, such that $\tau H\tau^{-1}=H'$.  Then for all $\alpha$ not in the Dirichlet spectrum of $(H\bs M,g)$, we have 
$$\Stek_\alpha(H\bs M,g,\rho)=\Stek_\alpha(H\bs M,g,\tau^*\rho)$$
where we continue to denote by $\rho$ and $\tau^*\rho$ the boundary density functions on $H\bs M$ induced by those on $M$.
\end{thm}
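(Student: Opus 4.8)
The plan is to reduce the statement to Theorem~\ref{thm:Sunada} by a change of perspective: rather than comparing two different groups acting on the same manifold, I compare the \emph{same} group $H$ acting on $M$ equipped with two different $G$-invariant densities, $\rho$ and $\tau^*\rho$. The key observation is that the isometry $\tau$ provides a bridge between these two situations. Since $\tau H\tau^{-1}=H'$, the map $\tau$ descends to a diffeomorphism $\overline\tau\colon H\bs M\to H'\bs M$, and because $\tau$ is an isometry of $(M,g)$, this descended map is an isometry between the quotient manifolds. First I would verify that $\overline\tau$ intertwines the relevant density functions: the density induced on $H'\bs M$ by $\rho$ pulls back under $\overline\tau$ to the density induced on $H\bs M$ by $\tau^*\rho$. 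This is essentially a bookkeeping computation tracing through the quotient projections, using that $\rho$ is $G$-invariant (hence its restriction descends consistently) and that $\tau^*\rho$ is $H$-invariant (so it too descends to $H\bs M$).

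With $\overline\tau$ in hand, the argument proceeds in two steps. First, $\overline\tau$ being a density-intertwining isometry immediately gives
$$\Stek_\alpha(H\bs M,g,\tau^*\rho)=\Stek_\alpha(H'\bs M,g,\rho),$$
since isometries intertwining the boundary densities preserve the Steklov spectrum (the eigenvalue problem \eqref{eq:LB_eigenvalue_problem} is pulled back verbatim). Second, I apply Theorem~\ref{thm:Sunada} directly to the two almost conjugate subgroups $H$ and $H'$ of $G$, acting on $M$ with the single $G$-invariant density $\rho$; this yields
$$\Stek_\alpha(H\bs M,g,\rho)=\Stek_\alpha(H'\bs M,g,\rho).$$
Chaining these two equalities produces exactly $\Stek_\alpha(H\bs M,g,\rho)=\Stek_\alpha(H\bs M,g,\tau^*\rho)$, which is the claim. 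I should note that the allowable $\alpha$ match up because $H\bs M$ and $H'\bs M$ share a Dirichlet spectrum (by Sunada) and $\overline\tau$ is an isometry (so it preserves the Dirichlet spectrum as well).

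The main obstacle I anticipate is the verification that the density functions match up correctly under $\overline\tau$, i.e.\ confirming that $\overline\tau$ carries the $\rho$-weighted boundary inner product on $H'\bs M$ to the $\tau^*\rho$-weighted one on $H\bs M$. The subtlety is that $\tau$ is assumed \emph{not} to lie in $G$, so it need not be $G$-equivariant in any naive sense; what makes the quotient map well-defined is precisely the single conjugacy relation $\tau H\tau^{-1}=H'$, and one must check that this suffices for $\overline\tau$ to be a well-defined map on cosets that respects both the metric and the induced density. Once the commuting relationship between $\tau$, the projections $M\to H\bs M$ and $M\to H'\bs M$, and the densities is pinned down, the rest is formal. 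A secondary point worth making explicit is that $\tau^*\rho$ remains $H$-invariant (so that it genuinely descends to $H\bs M$), which follows because $\rho$ is $H'$-invariant and $\tau$ conjugates $H$ into $H'$; this keeps the right-hand side of the desired identity a legitimately posed Steklov problem.
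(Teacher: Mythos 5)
Your proposal is correct and takes essentially the same approach as the paper: apply Theorem~\ref{thm:Sunada} to obtain $\Stek_\alpha(H\bs M,g,\rho)=\Stek_\alpha(H'\bs M,g,\rho)$, then use the isometry $(H\bs M,g)\to(H'\bs M,g)$ induced by $\tau$ via the relation $\tau H\tau^{-1}=H'$ to identify $\Stek_\alpha(H'\bs M,g,\rho)$ with $\Stek_\alpha(H\bs M,g,\tau^*\rho)$. The bookkeeping you single out as the main obstacle --- that $\tau$ carries $H$-orbits to $H'$-orbits and intertwines the induced densities, and that $\tau^*\rho$ is $H$-invariant --- is precisely what the paper's two-line proof uses implicitly, and your verification of it is sound.
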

\begin{proof}  By Theorem~\ref{thm:Sunada}, $\Stek_\alpha(H\bs M,g,\rho)=\Stek_\alpha(H'\bs M,g,\rho)$ for all $\alpha$ not in the Dirichlet spectrum of $H\bs M$.   By the additional hypothesis of Theorem~\ref{thm:SunadaDensity}, $\tau$ induces an isometry $\tau: (H\bs M,g)\to (H'\bs M,g)$, so we have $\Stek_\alpha(H'\bs M,\rho)=\Stek_\alpha(H\bs M, \tau^*\rho)$.    

\end{proof}
\begin{exa}[Flat surfaces and planar domains]
 In Example 5.6 in \cite{GordonWebbWolpert1992}, the tile in Figure~\ref{fig:Basic_tile} is replaced by a tile $T$ that has not only a reflection symmetry $\beta_0$ as in \ref{exa.domains} but also a rotational symmetry $\tau_0$ that commutes with $\beta_0$.  The tile is pictured in Figure 15 
of \cite{GordonWebbWolpert1992}.  Construct $M=M(G,S)$ and $M_i=M(H_i\bs M,S)$, $i=1,2$ exactly as in \ref{exa.buser} but using the more symmetric tile.    The isometry $\tau_0$ of the basic tile lifts to an isometry $\tau$ of $M$.   The isometry $\tau$ normalizes the group $G$ and  $\tau A\tau^{-1}=(A^t)^{-1}$ for all $A\in G$.   In particular, $\tau H_1\tau^{-1}=H_2$.   Define $\partial_0 T$ as in \ref{exa.buser} and let $\rho_0:\partial_0 T\to\R$ be a boundary density function that is \emph{not} invariant under the restriction to $\partial_0 T$ of the rotational symmetry $\tau_0$.  Denote by $\rho$ the resulting boundary density on $M$.  Then the hypotheses of Theorem~\ref{thm:SunadaDensity} are satisfied with $H_1$ and $H_2$ playing the roles of $H$ and $H'$.   Thus we have $\Stek_\alpha(M_1,\rho)=\Stek_\alpha(M_1, \tau^*\rho)$ for all $\alpha$ not in the Dirichlet spectrum of $M_1$.  
 
Next we construct planar domains.   In the construction in the previous paragraph, impose the additional requirement that $\rho_0$ be invariant under the reflection symmetry $\beta_0$.   As in \ref{exa.domains}, the symmetry $\beta_0$ of the new basic tile lifts to isometric involutions of $M$, $M_1$, and $M_2$.  Let $\mathcal{O}_i$ be the orbifold quotient of $M_i$ by the involution $\beta_i$.  As before, the underlying space of $\mathcal{O}_i$ is a planar domain $D_i$ whose boundary consists of the projection to $\mathcal{O}_i$ of the boundary of $M_i$ (this part is the boundary of the orbifold) together with the a collection of straight line segments corresponding to the singular set of the orbifold.   The boundary density $\rho$ on $M_i$ projects to a density function, still denoted $\rho$, on the first part of the bounary of $D_i$; we extend $\rho$ to the full boundary by setting it to be zero on the orbifold singular set.  Because $\beta_0$ and $\tau_0$ commute, the isometry $\tau:M_1\to M_2$ satisfies $\tau\circ\beta_1=\beta_2\circ\tau$, and thus $\tau$ induces an isometry between the planar domains $D_1$ to $D_2$.  We then have $\Stek_\alpha(D_1,\rho)=\Stek_\alpha(D_1,\tau^*\rho)$ for all $\alpha$ not in the Dirichlet spectrum of $D_1$.    
\end{exa}

The modification of the torus action method is similar.
 
\bibliographystyle{amsalpha}
\bibliography{Robin_Steklov-3}

\end{document}